\documentclass[a4paper,11pt, oneside]{article}
\usepackage[ansinew]{inputenc}
\usepackage{graphicx}
\usepackage{xcolor}
\usepackage{amsfonts} 
\usepackage{amsthm} 
\usepackage{amsmath} 
\usepackage{amssymb}
\newcommand{\R}{{\mathbb R}}

\DeclareMathOperator{\diver}{div}

\usepackage{color}
\usepackage{hyperref}

\newtheorem{theorem}{Theorem}
\newtheorem{lemma}{Lemma}
\newtheorem{remark}{Remark}

\begin{document}

\title{Asymptotic justification of the Reynolds equation for a spherical bearing}

\author{Guy Bayada${}^{\ref{dir4}}$, Jos\'e M. Rodr\'{\i}guez${}^{\ref{dir1}, \ref{dir2}}$, Raquel Taboada-V\'azquez${}^{\ref{dir1}, \ref{dir3}}$}
\date{}

\maketitle

{\footnotesize 
\begin{enumerate}
\item INSA Lyon, Centrale Lyon, Universit\'e Lyon 1, Universit\'e Jean Monnet, CNRS, ICJ UMR5208, 69621 Villeurbanne, France. E-mail: guy.bayada@insa-lyon.fr. \label{dir4}
\item CITMAga, Santiago de Compostela, Spain. \label{dir1}
\item Department of Mathematics, Higher Technical University College of Architecture, Universidade da Coru\~na, A Coru\~na, Spain. E-mail: jose.rodriguez.seijo@udc.es. \label{dir2}
\item Department of Mathematics, School of Civil Engineering, Universidade da Coru\~na, A Coru\~na, Spain. E-mail: raquel.taboada@udc.es. \label{dir3}
\end{enumerate}
}

\begin{abstract}

To our knowledge, there is no rigorous mathematical justification of the Reynolds equation for a spherical bearing. In this article, we demonstrate that the solution of the Stokes problem in a domain between two closely spaced spheres converges, as the distance between the spheres approaches zero, to the solution of a ``Reynolds equation".

\end{abstract}

\section{Introduction}

In 1886, Reynolds introduced a model describing the behavior of a thin fluid layer in a lubrication problem (see \cite{Reynolds}). In deriving his model, Reynolds assumed that one of the two surfaces confining the fluid was flat. Although this assumption is locally acceptable, it may not be valid in many practical cases, leading several authors to propose alternative versions of the Reynolds equation, in which neither of the bounding surfaces is flat (see, for example, \cite{AskariAndersen2019}, \cite{BayadaChambat1990}, \cite{Chaoetal2018}, \cite{Elrod}, \cite{GoenkaBooker1980}, \cite{Jin2006}, \cite{Hakwoon2012}, \cite{Meyer2002}, \cite{Zhang2024}). In \cite{RodTab2022}-\cite{RodTab2022b}, two of the authors of this paper presented a new lubrication model applicable to any two boundary surfaces, using the method of asymptotic expansions to derive it formally. In \cite{BRT1}, the authors of this paper apply the results of \cite{RodTab2022}-\cite{RodTab2022b} to the case of cylindrical, spherical, and conical bearings, comparing the resulting models with others that can be found in the literature (for example, those already mentioned \cite{BayadaChambat1990}, \cite{GoenkaBooker1980},  \cite{Jin2006}, \cite{Hakwoon2012}, \cite{Meyer2002}).

Since most of the methods used in the literature to derive lubrication models are formal, several authors have attempted to justify these models more rigorously (for example, by proving that the solution of the Stokes problem converges, in a certain sense, to the solution of the lubrication model). Several justifications of the classical Reynolds equation can be found in \cite{BayadaChambat1986}, \cite{Cimatti1983} and \cite{Cimatti1987}, while \cite{Ciuperca2018} establishes the validity of the compressible Reynolds equation. Additional convergence results are mentioned in \cite{BayadaVazquez2007}. 

However, rigorous justifications of the Reynolds equation when neither of the confining surfaces is flat are scarce in the literature. A justification for a journal bearing is presented in \cite{BayadaChambat1990}, and \cite{Ghosh2021} provides a justification for a flow through a curved pipe. To our knowledge, no rigorous justification of the Reynolds equation between two arbitrary surfaces is currently available. 

In this article, we present a rigorous proof that, in the case of a viscous fluid confined between two closely spaced spheres, the solution of the Stokes problem converges, in a sense that we will specify, to the solution of the Reynolds equation \eqref{eq-det-p-17-a}--\eqref{eq-det-p-18}.

In section~\ref{section-step-1}, we define the domain where the Stokes problem is posed, namely the region between two closely spaced spheres. This domain depends on a small dimensionless parameter $\varepsilon$ related to the distance between the two spheres, which is assumed to tend to zero. We map the problem onto a reference domain independent of $\varepsilon$ by means of a change of variable. 

In section~\ref{section-step-2} we formulate the Stokes problem in the domain defined in section~\ref{section-step-1}. In section~\ref{section-step-3} we define the function spaces, and their norms, that will be used to study the convergence of the solution of the Stokes problem. 

Section~\ref{section-step-4} is dedicated to proving the existence of a divergence-free velocity field, which satisfies the boundary condition \eqref{eq-Stokes-3} and, also, the estimate \eqref{eq-estimate-1}. In sections~\ref{section-step-5} and \ref{section-step-6} we demonstrate the convergence of $\vec{u}^\varepsilon$ and $p^\varepsilon$, where $\vec{u}^\varepsilon$ and $p^\varepsilon$ denote, respectively, the velocity field and pressure of the Stokes problem posed in section~\ref{section-step-2}. Finally, in section~\ref{section-step-7}, we determine their respective limits.

\section{Working domain and a reference domain} \label{section-step-1}

We wish to point out that, in this article, the word ``sphere" should be understood as a simplification, and that the proposed procedure can be extended without major modifications to any two close surfaces defined in spherical coordinates by an expression of the type $\rho = \rho(\varphi, \theta)$.

In most of the lubricated devices, the outer ring (the spherical housing) must be able to assemble over the inner spherical ball, and standard spherical bearings do not reach $180^\circ$ coverage, typically covering around $120^\circ$ - $150^\circ$ of the inner spherical surface (see chapter 12 in \cite{Freneetal1997}, \cite{Jin2006}, \cite{Hakwoon2012}). Therefore, in the study that follows, we will only consider a sector of the inner sphere, which is assumed  to have radius $R$, and which can be parameterized in cartesian coordinates, using spherical coordinates $(\varphi, \theta, \rho)$, as follows:

\begin{equation}
\vec{X}(\varphi, \theta) = R (\sin \varphi \cos \theta, \sin \varphi \sin \theta, \cos \varphi), \quad (\varphi, \theta) \in D, \label{eq-inner-sphere} 
\end{equation}
where 
\begin{equation}
D= \left \{ (\varphi, \theta)\, /\, \varphi \in (\varphi_0,\varphi_1), \ 
\theta \in (0,2\pi) \right \}, \quad 0 < \varphi_0 < \varphi_1 < \pi. \label{eq-dominio-D}
\end{equation}

\begin{remark} \label{remark-bound-sin}

Assumption \eqref{eq-dominio-D} about the angle $\varphi$ implies that there exists $c_0 > 0$ such that $\sin \varphi \ge c_0$ 
for all $\varphi \in (\varphi_0, \varphi_1)$.
\end{remark}

\begin{remark} \label{remark-supply-holes}
As in the case of flat geometries, various boundary conditions can be considered, involving both pressure and velocities (see \cite{BRT1}), and they are mainly related to operational conditions, the geometry of the bearing and the location of the supply holes. 

From a physical point of view, the regions $\varphi = \varphi_0$ and $\varphi = \varphi_1$ define holes through which lubricant is supplied to the device (see \eqref{eq-g-4}--\eqref{eq-g-5}, Figure \ref{spherical-bearing}, and \cite{Freneetal1997} or \cite{Hakwoon2012}).

\end{remark}

\begin{figure}[htbp]
\vspace{-0.7cm}
\begin{center}
\includegraphics[width=12cm]{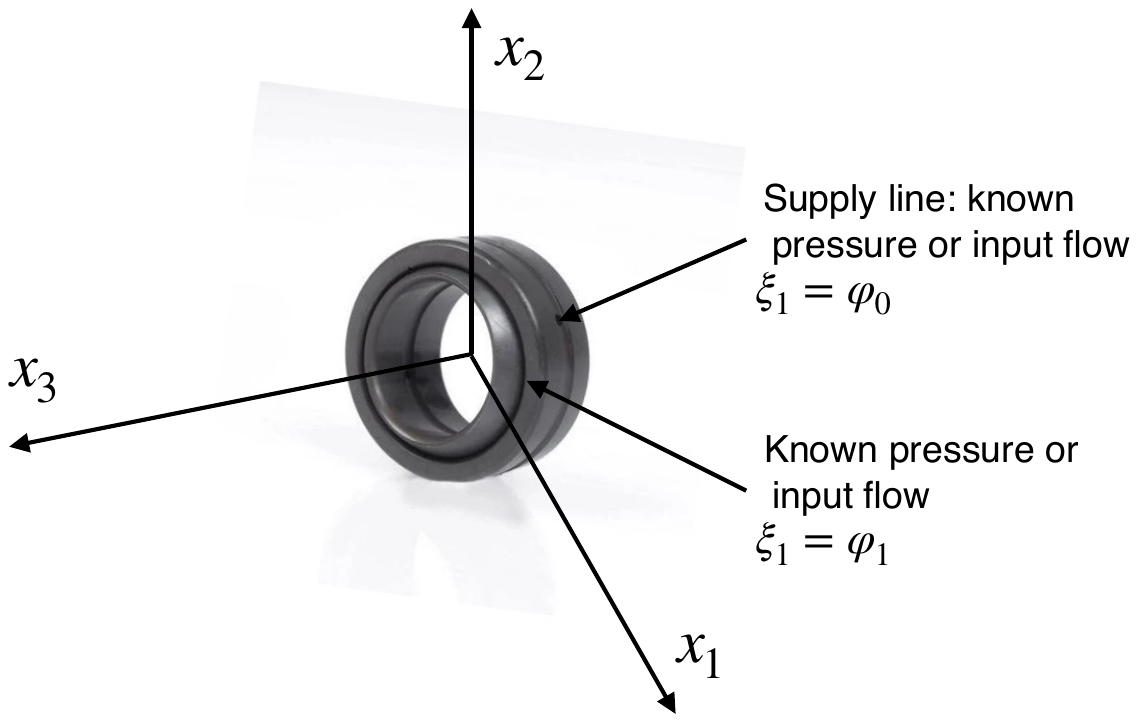}
\vspace{-1.5cm}
\caption{Example of spherical bearing}
\label{spherical-bearing}
\end{center}
\end{figure}

The gap between the inner sphere and the outer sphere is given by
\begin{align}
&h^\varepsilon(\varphi, \theta) = \varepsilon h(\varphi, \theta), \quad 0 < h_0 \le h(\varphi, \theta) \le h_1, \\
&h \textrm{ is smooth and } 2\pi \textrm{-periodic in the variable } \theta 
\end{align}

Let us define the unit normal to the sphere \eqref{eq-inner-sphere} as
\begin{equation}
\vec{N}(\varphi, \theta) = (\sin \varphi \cos \theta, \sin \varphi \sin \theta, \cos \varphi)
\end{equation}
then, the domain between the two spheres is defined by
\begin{align}
\Omega^\varepsilon &= \{ \vec{x} = (x_1, x_2, x_3) \in \R^3 
/ \nonumber \\
&\quad X_i(\varphi, \theta) \le x_i \le X_i(\varphi, \theta) + \xi_3 h^\varepsilon(\varphi, \theta) N_i(\varphi, \theta), 
\quad (i=1,2,3), \nonumber \\
&\quad (\varphi, \theta) \in D, \xi_3 \in (0,1) \} \label{eq-domain}
\end{align}

In the above equations $\varepsilon>0$ denotes a small dimensionless parameter that we will make tend to zero, indicating that the distance between the two spheres is very small.

\begin{figure}[htbp]
\vspace{-0.3cm}
\begin{center}
\includegraphics[width=6cm]{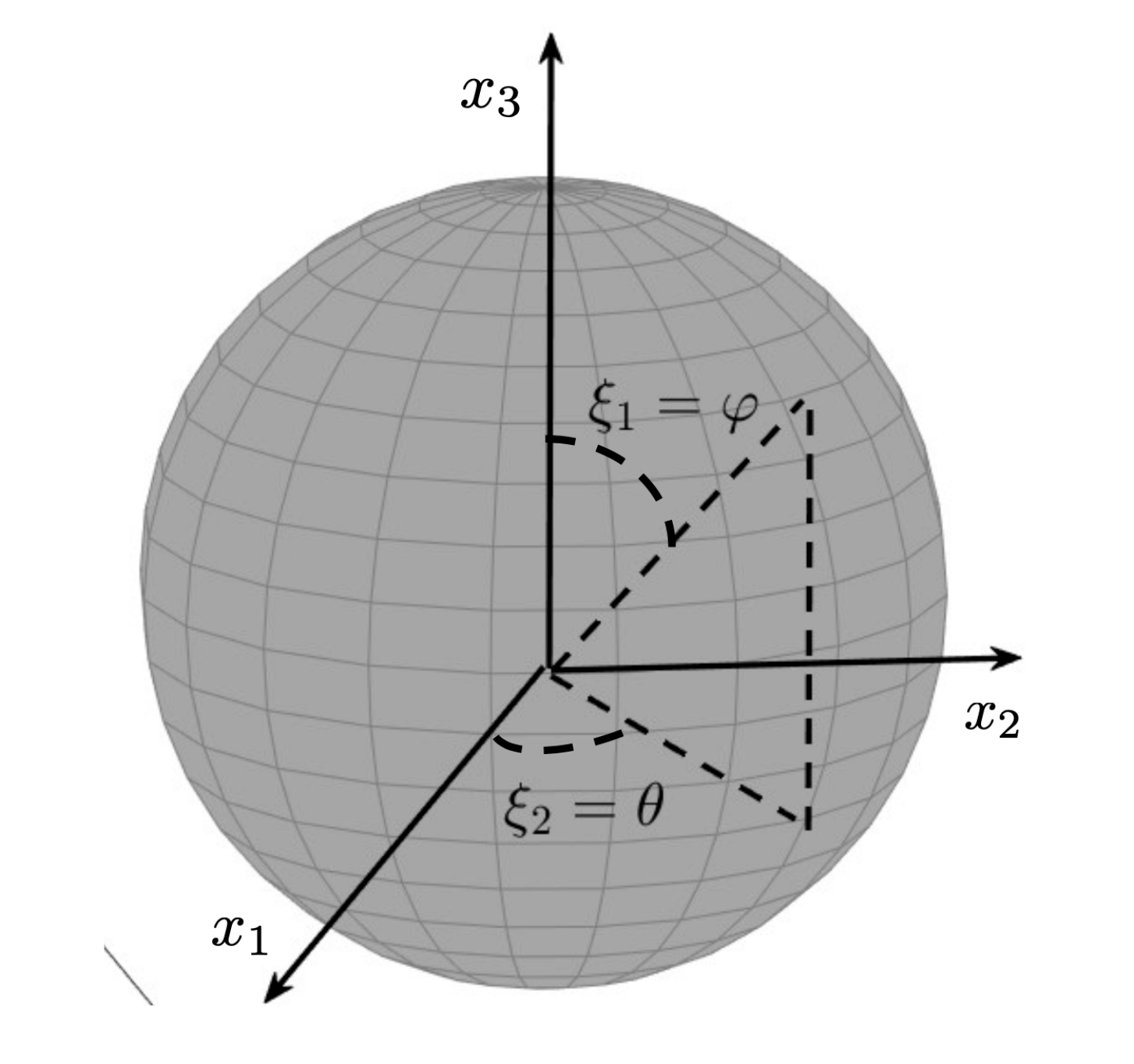}
\caption{Parameters $\xi_1, \xi_2$}
\label{parameters-sphere}
\end{center}
\end{figure}

\begin{remark}
Denoting $\vec{\xi} = (\xi_1, \xi_2, \xi_3)$, with $\xi_1 = \varphi$ and $\xi_2=\theta$, the domain \eqref{eq-domain} can also be defined by means of a change of variables 
\begin{equation}
\vec{x} = \Phi^\varepsilon(\vec{\xi}) 
= \vec{X}(\xi_1, \xi_2) + \varepsilon \xi_3 h(\xi_1,\xi_2) \vec{N}(\xi_1, \xi_2) \label{eq-change-variable}
\end{equation}
so we have that
\begin{equation}
\Omega^\varepsilon = \Phi^\varepsilon(\Omega), \quad \Omega = D \times (0,1) 
\end{equation}
For the steps that follow, it will be useful to decompose the previous change of variables into the following simpler ones:
\begin{align}
&\vec{\zeta} = (\zeta_1, \zeta_2, \zeta_3) = 
\Phi_1^\varepsilon(\vec{\xi}) = (\xi_1, \xi_2, \varepsilon \xi_3), \\
&(\varphi, \theta, \rho) = \Phi_2^\varepsilon(\vec{\zeta}) = 
(\zeta_1, \zeta_2, R + \zeta_3 h(\zeta_1, \zeta_2)), \\
&\vec{x} = \Phi_3^\varepsilon(\varphi, \theta, \rho) = 
(\rho \sin \varphi \cos \theta, \rho \sin \varphi \sin \theta, \rho \cos \varphi), 
\end{align}
Thus, the change of variables \eqref{eq-change-variable} can be written 
\begin{equation}
\Phi^\varepsilon = \Phi_3^\varepsilon \circ \Phi_2^\varepsilon \circ \Phi_1^\varepsilon
\end{equation}
and the domain \eqref{eq-domain} is expressed, for each of the different variables,
\begin{align}
&\Omega_i^\varepsilon = \Phi_i^\varepsilon(\Omega_{i-1}^\varepsilon), \qquad (i=1,2,3), \\
&\Omega_0^\varepsilon = \Omega, 
\end{align}
so $\Omega_0^\varepsilon = \Omega$ does not depend on $\varepsilon$, 
and $\Omega_3^\varepsilon$ is exactly $\Omega^\varepsilon$. 
More precisely, the domains $\Omega_i^\varepsilon \ (i=0,1,2,3)$ can be defined as: 
\begin{align}
&\Omega_0^\varepsilon = \Omega = \{ (\xi_1, \xi_2, \xi_3) / \xi_1 \in (\varphi_0,\varphi_1), \xi_2 \in (0,2\pi), 
\xi_3 \in (0,1) \} \\
&\Omega_1^\varepsilon = \{ (\zeta_1, \zeta_2, \zeta_3) / \zeta_1 \in (\varphi_0,\varphi_1), \zeta_2 \in (0,2\pi), 
\zeta_3 \in (0,\varepsilon) \} \\
&\Omega_2^\varepsilon = \{ (\varphi, \theta, \rho) / \varphi \in (\varphi_0,\varphi_1), \theta \in (0,2\pi), 
\rho \in (R, R + \varepsilon h(\varphi, \theta)) \} \\
&\Omega_3^\varepsilon = \Omega^\varepsilon \textrm{ given by \eqref{eq-domain}} 
\end{align}
\end{remark}

\section{Stokes problem} \label{section-step-2}

Let us consider de Stokes problem posed in $\Omega^\varepsilon$:

\begin{align}
- \mu \Delta \vec{u}^\varepsilon + \nabla p^\varepsilon &= \vec{0} \quad \textrm{in } \Omega^\varepsilon 
\label{eq-Stokes-1} \\
\diver \vec{u}^\varepsilon &=  0 \quad \textrm{in } \Omega^\varepsilon \label{eq-Stokes-2} \\
\vec{u}^\varepsilon &= \vec{g}^\varepsilon \quad \textrm{on } \partial \Omega^\varepsilon \label{eq-Stokes-3}
\end{align}
where $\mu > 0$ and $\vec{g}^\varepsilon \in \left ( H^{1/2}(\partial \Omega^\varepsilon) \right )^3$. The proof of the following Theorem, which will be used in the sequel, can be found in \cite{NSTemam}.

\begin{theorem} \label{theorem-existence-stokes-1}
Problem \eqref{eq-Stokes-1}--\eqref{eq-Stokes-3} has an unique solution $\vec{u}^\varepsilon \in \left ( H^1(\Omega^\varepsilon) \right )^3$, $p^\varepsilon \in L_0^2(\Omega^\varepsilon) = \left \{ \phi \in 
L^2(\Omega^\varepsilon) / \int_{\Omega^\varepsilon} \phi\, d\vec{x} = 0 \right \}$, if the following compatibility condition is satisfied: 
\begin{equation}
\int_{\partial \Omega^\varepsilon} \vec{g}^\varepsilon \cdot \vec{n}^\varepsilon dA = 0 \label{eq-cond-comp}
\end{equation}
where $\vec{n}^\varepsilon$ is the outer unit normal to $\partial \Omega^\varepsilon$ and $dA$ is the differential element of area. Furthermore, there exists a constant $C^\varepsilon$ (depending on $\mu$ and $\Omega^\varepsilon$) such that 
\begin{equation}
{\| \vec{u}^\varepsilon \|}_{\left ( H^1(\Omega^\varepsilon) \right )^3} + 
{\| p^\varepsilon \|}_{L_0^2(\Omega^\varepsilon)} \le C^\varepsilon {\| \vec{g}^\varepsilon \|}_{\left ( H^{1/2}(\partial \Omega^\varepsilon) \right )^3}
\end{equation}

\end{theorem}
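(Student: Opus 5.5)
The plan is the classical one: lift the boundary datum to a divergence-free field, then solve a homogeneous Stokes problem by Lax--Milgram on divergence-free fields, and finally recover the pressure through de Rham's theorem and the inf-sup inequality. Throughout, $\varepsilon$ is fixed and $\Omega^\varepsilon$ is a fixed bounded Lipschitz domain. Lipschitz regularity holds because $\Phi^\varepsilon$ is a smooth diffeomorphism from $\overline\Omega$ onto its image: $\sin\varphi\ge c_0$ by Remark~\ref{remark-bound-sin}, and $h$ is smooth and periodic in $\theta$. The domain is a periodic-in-$\theta$ shell sector, topologically a solid torus, whose boundary has several connected components. We therefore only use the \emph{global} condition \eqref{eq-cond-comp}.

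\textbf{Step 1 (lifting).} First we construct $\vec{G}^\varepsilon\in (H^1(\Omega^\varepsilon))^3$ with $\diver \vec{G}^\varepsilon=0$, $\vec{G}^\varepsilon=\vec{g}^\varepsilon$ on $\partial\Omega^\varepsilon$, and $\|\vec{G}^\varepsilon\|_{H^1}\le C\|\vec{g}^\varepsilon\|_{H^{1/2}}$.
\begin{itemize}
\item Take any bounded right inverse of the trace operator, giving $\vec{w}\in (H^1(\Omega^\varepsilon))^3$ with $\vec{w}|_{\partial\Omega^\varepsilon}=\vec{g}^\varepsilon$.
\item By \eqref{eq-cond-comp} and the divergence theorem, $\int_{\Omega^\varepsilon}\diver\vec{w}\,d\vec{x}=0$, so $\diver\vec{w}\in L^2_0(\Omega^\varepsilon)$.
\item Apply the Bogovskii operator, i.e.\ the surjectivity of $\diver:(H^1_0(\Omega^\varepsilon))^3\to L^2_0(\Omega^\varepsilon)$ on bounded Lipschitz domains. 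This gives $\vec{v}\in (H^1_0)^3$ with $\diver\vec{v}=\diver\vec{w}$ and $\|\vec{v}\|_{H^1}\le C\|\diver\vec{w}\|_{L^2}$.
\item Set $\vec{G}^\varepsilon=\vec{w}-\vec{v}$.
\end{itemize}
This is where compatibility is genuinely needed, and the surjectivity of the divergence is the main nontrivial ingredient. I would cite it from Temam or Girault--Raviart rather than reprove it; it follows from Ne\v{c}as' inequality $\|q\|_{L^2_0}\le C\|\nabla q\|_{H^{-1}}$.

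\textbf{Step 2 (velocity).} Next, write $\vec{u}^\varepsilon=\vec{G}^\varepsilon+\vec{z}$ with $\vec{z}\in V=\{\vec{v}\in (H^1_0(\Omega^\varepsilon))^3:\diver\vec{v}=0\}$, a closed subspace. The unknown $\vec{z}$ must satisfy
\[
\mu\int_{\Omega^\varepsilon}\nabla\vec{z}:\nabla\vec{v}\,d\vec{x}=-\mu\int_{\Omega^\varepsilon}\nabla\vec{G}^\varepsilon:\nabla\vec{v}\,d\vec{x}\quad\forall\vec{v}\in V .
\]
The form on the left is coercive on $V$ by Poincar\'e's inequality, and the right-hand side is bounded. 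Lax--Milgram then gives a unique $\vec{z}$ with $\|\vec{z}\|_{H^1}\le C\|\vec{G}^\varepsilon\|_{H^1}$.

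\textbf{Step 3 (pressure).} The functional $\vec{v}\mapsto\mu\int\nabla\vec{u}^\varepsilon:\nabla\vec{v}$ belongs to $H^{-1}$ and vanishes on $V$. By de Rham's theorem (equivalently, the closed-range argument built on the same inf-sup inequality as Step 1), there is a unique $p^\varepsilon\in L^2_0$ such that $-\mu\Delta\vec{u}^\varepsilon+\nabla p^\varepsilon=0$ in the sense of distributions. The inf-sup inequality also yields
\[
\|p^\varepsilon\|_{L^2_0}\le C\,\|\nabla p^\varepsilon\|_{H^{-1}}=C\mu\,\|\Delta\vec{u}^\varepsilon\|_{H^{-1}}\le C\mu\,\|\vec{u}^\varepsilon\|_{H^1}.
\]

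\textbf{Step 4 (uniqueness and estimate).} For uniqueness, the difference of two solutions lies in $V$ and is $\mu$-harmonic in the weak sense, so testing with itself shows it is zero. The pressures then differ by a constant, which is zero in $L^2_0$. Chaining the bounds from Steps 1--3 gives the stated estimate, with $C^\varepsilon$ depending on $\mu$ and on $\Omega^\varepsilon$ through the trace-lifting, Bogovskii and Poincar\'e constants. These constants degenerate as $\varepsilon\to0$, which is why the theorem is stated with an $\varepsilon$-dependent constant and why the later sections need a separate, uniform construction.
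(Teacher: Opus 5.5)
Your argument is correct and is the classical proof (trace lifting corrected via the surjectivity of $\diver:(H^1_0)^3\to L^2_0$, Lax--Milgram on divergence-free $H^1_0$ fields, and de Rham/closed range for the pressure) from Temam's book, which is all the paper relies on, since it simply cites \cite{NSTemam} rather than giving a proof. One harmless slip: $\Omega^\varepsilon$ is indeed a solid torus, but its boundary (the inner and outer spherical zones glued to the two conical bands along circles) is a single torus and hence connected, and in any case the global flux condition \eqref{eq-cond-comp} is all the Stokes problem needs.
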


Moreover, the following weak formulation is valid: 

\begin{theorem}
If the compatibility condition \eqref{eq-cond-comp} is satisfied, then 
there exists an unique solution $\vec{u}^\varepsilon \in V = \left \{ \vec{v} \in \left ( H^1(\Omega^\varepsilon) \right )^3 
/ \diver \vec{v} = 0 \right \}$, $p^\varepsilon \in L_0^2(\Omega^\varepsilon)$ of the problem 
\begin{equation}
\mu \int_{\Omega^\varepsilon} \nabla \vec{u}^\varepsilon \cdot \nabla \vec{\psi} \, d\vec{x} = 
\int_{\Omega^\varepsilon} p^\varepsilon \diver \vec{\psi} \, d\vec{x}, \qquad 
\forall \vec{\psi} \in \left ( H^1_0(\Omega^\varepsilon) \right )^3 \label{eq-Stokes-var}
\end{equation}
such that $\vec{u}^\varepsilon = \vec{g}^\varepsilon$ on $\partial \Omega^\varepsilon$ in the sense of traces.
\end{theorem}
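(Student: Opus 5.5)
We derive the weak formulation directly from Theorem~\ref{theorem-existence-stokes-1} and then prove uniqueness separately. The domain $\Omega^\varepsilon$ is the image of the box $\Omega$ under the smooth diffeomorphism $\Phi^\varepsilon$; this holds because $\sin\varphi\ge c_0>0$ by Remark~\ref{remark-bound-sin} and $h\ge h_0>0$. For fixed $\varepsilon$ it is therefore a bounded Lipschitz domain, so the trace and density results used below apply.

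\emph{Existence.} Let $(\vec{u}^\varepsilon,p^\varepsilon)\in (H^1(\Omega^\varepsilon))^3\times L^2_0(\Omega^\varepsilon)$ be the solution given by Theorem~\ref{theorem-existence-stokes-1}. Equation \eqref{eq-Stokes-2} shows that $\vec{u}^\varepsilon\in V$, and \eqref{eq-Stokes-3} gives the trace condition. Equation \eqref{eq-Stokes-1} holds in the sense of distributions, so testing it against $\vec{\psi}\in(\mathcal{D}(\Omega^\varepsilon))^3$ and using the definition of distributional derivatives gives
\[
\mu\int_{\Omega^\varepsilon}\nabla\vec{u}^\varepsilon\cdot\nabla\vec{\psi}\,d\vec{x}-\int_{\Omega^\varepsilon}p^\varepsilon\diver\vec{\psi}\,d\vec{x}=0 .
\]
Both terms are continuous linear functionals of $\vec{\psi}$ in the $(H^1)^3$ norm, because $\nabla\vec{u}^\varepsilon\in L^2$ and $p^\varepsilon\in L^2$. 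Since $\mathcal{D}$ is dense in $H^1_0$, the identity extends to every $\vec{\psi}\in(H^1_0(\Omega^\varepsilon))^3$, which is \eqref{eq-Stokes-var}.

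\emph{Uniqueness.} Let $(\vec{u}_1,p_1)$ and $(\vec{u}_2,p_2)$ be two solutions, and set $\vec{w}=\vec{u}_1-\vec{u}_2$ and $q=p_1-p_2$. Then $\vec{w}\in V$ and its trace vanishes, so $\vec{w}\in(H^1_0)^3$ with $\diver\vec{w}=0$. Subtracting the two weak formulations and choosing $\vec{\psi}=\vec{w}$ gives $\mu\|\nabla\vec{w}\|^2_{L^2}=\int q\diver\vec{w}=0$. Poincar\'e's inequality in $H^1_0$ then yields $\vec{w}=0$. It follows that $\int q\diver\vec{\psi}=0$ for all $\vec{\psi}\in(H^1_0)^3$, so $\nabla q=0$ in $H^{-1}$. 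Since $\Omega^\varepsilon$ is connected, $q$ is constant, and the zero-mean condition $q\in L^2_0$ forces $q=0$.

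The one step that is not routine is justifying that $\Omega^\varepsilon$ is a connected Lipschitz domain, which underlies the traces, the density of $\mathcal{D}$ in $H^1_0$, Poincar\'e's inequality, and the ``$\nabla q=0\Rightarrow q$ constant'' argument. This follows from the diffeomorphism $\Phi^\varepsilon$ described in the first paragraph. Note that $\Omega^\varepsilon$ is an annular spherical zone: $\theta$ is periodic, and the piece of the spherical shell is connected. As an alternative route to existence, one could lift $\vec{g}^\varepsilon$ to a divergence-free $H^1$ field using \eqref{eq-cond-comp}, solve the Lax--Milgram problem on $\{\vec{v}\in(H^1_0)^3:\diver\vec{v}=0\}$, and recover $p^\varepsilon$ through de Rham's theorem and the inf-sup (Ne\v{c}as) inequality. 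This, however, only repeats the content of Theorem~\ref{theorem-existence-stokes-1}.
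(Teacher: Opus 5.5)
Your proof is correct and follows essentially the same route as the paper. The paper gives no separate argument: it presents the weak formulation as an immediate consequence of Theorem~\ref{theorem-existence-stokes-1} (the standard theory in Temam), and you have written out the routine steps, namely testing the distributional equation and extending by density for existence, then the energy argument and $\nabla q = 0 \Rightarrow q$ constant for uniqueness. As a minor remark, density of $\mathcal{D}(\Omega^\varepsilon)$ in $H^1_0(\Omega^\varepsilon)$ holds by definition and the constancy of $q$ needs only connectedness, so the Lipschitz property is needed only for the trace operator itself and for identifying zero-trace $H^1$ functions with elements of $H^1_0(\Omega^\varepsilon)$.
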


\section{Relations between the norms in certain function spaces} \label{section-step-3}

Given a scalar function $\phi$ defined on $\Omega^\varepsilon$, we denote by $\phi_{\Omega_i^\varepsilon}$ the corresponding function on $\Omega_i^\varepsilon$ obtained via the change of variables. Thus,
\begin{equation}
\phi_{\Omega_i^\varepsilon}(\vec{y}^{\, i}) = \phi(\vec{x}), \quad (i=0,1,2,3),
\end{equation}
where 
\begin{equation}
\vec{y}^{\, 0} = \vec{\xi}, \quad \vec{y}^{\, 1} = \vec{\zeta}, \quad \vec{y}^{\, 2} = (\varphi, \theta, \rho), 
\quad \vec{y}^{\, 3} = \vec{x}.
\end{equation}

If there is no possibility of confusion, we will omit the subscript and simply write
\begin{equation}
\phi(\vec{y}^{\, i}) = \phi(\vec{x}), \quad (i=0,1,2),
\end{equation}
or only $\phi$.

For vector functions, in addition to the change of variables, the change of basis must also be taken into account. For example, 
\begin{align}
\vec{u} &= u_1(\vec{x})\vec{e}_1 + u_2(\vec{x})\vec{e}_2 + u_3(\vec{x})\vec{e}_3 
\quad \textrm{in } \Omega^\varepsilon \\
&= u_{\varphi, \Omega_2^\varepsilon}(\vec{y}^{\, 2})\vec{e}_\varphi 
+ u_{\theta, \Omega_2^\varepsilon}(\vec{y}^{\, 2})\vec{e}_\theta 
+ u_{\rho, \Omega_2^\varepsilon}(\vec{y}^{\, 2})\vec{e}_\rho 
\quad \textrm{in } \Omega_2^\varepsilon 
\end{align}
where $\{ \vec{e}_1, \vec{e}_2, \vec{e}_3 \}$ is an orthonormal basis of $\R^3$ and 
\begin{align}
\vec{e}_\varphi&=(\cos\xi_1\cos\xi_2,\cos \xi_1\sin\xi_2,-\sin\xi_1) \label{base_e_phi} \\
\vec{e}_\theta&=(-\sin\xi_2,\cos\xi_2,0) \label{base_e_theta} \\
\vec{e}_\rho &= (\sin\xi_1\cos\xi_2,\sin \xi_1\sin\xi_2,\cos\xi_1) \label{base_e_rho} 
\end{align}
where we recall that $\xi_1 = \zeta_1 = \varphi$ and $\xi_2 = \zeta_2 = \theta$.

Let us consider the outer surface of the domain in the different systems of coordinates:
\begin{align}
&S_0^+ =  \{ (\xi_1, \xi_2, \xi_3) \in \partial \Omega = \partial \Omega_0^\varepsilon 
/ \xi_3 = 1 \} \\
&S_1^+ = \{ (\zeta_1, \zeta_2, \zeta_3) \in \partial \Omega_1^\varepsilon / \zeta_3  = \varepsilon \} \\
&S_2^+ =  \{ (\varphi, \theta, \rho) \in \partial \Omega_2^\varepsilon / \rho = R + \varepsilon h(\varphi, \theta) \} \\
&S_3^+ =  \{ (x_1, x_2, x_3) \in \partial \Omega^\varepsilon = \partial \Omega_3^\varepsilon 
/ x_1^2 + x_2^2 + x_3^2 = (R + \varepsilon h(\varphi, \theta))^2 \}
\end{align}

We now introduce the following function spaces $(i=0,1,2,3)$:

\begin{equation}
V_i = \left \{ \psi \in H^1(\Omega_i^\varepsilon) / \psi = 0 \textrm{ on } S_i^+ \right \}  
\end{equation}
and the following norms 
\begin{align}
&\left | \psi \right |_i = \left ( \int_{\Omega_i^\varepsilon} \psi^2\, d\vec{y}^{\, i} \right )^{1/2} \\
&\left \| \psi \right \|_i = \left ( \sum_{j=1}^3 \int_{\Omega_i^\varepsilon} \left ( \frac{\partial \psi}{\partial y_j^i} \right )^2\, d\vec{y}^{\, i} \right )^{1/2} 
\end{align}
\begin{remark}
In what follows, we will denote by $C$ or $C_i$ various constants independent of $\varepsilon$, which may depend on other data (such as $\Omega$, $\mu$, $h$, \dots), although not explicitly mentioned.
\end{remark}
\begin{remark}
It is possible to prove the following Poincar\'e inequality: There exists a constant $C$ independent of $\varepsilon$ such that 
\begin{equation}
\left | \psi \right |_i \le C \left \| \psi \right \|_i \qquad (i=0,1,2,3) \label{eq-poincare}
\end{equation}
so the norm $\left \| \cdot \right \|_i$ is equivalent to the $H^1(\Omega_i^\varepsilon)$ norm. In fact, for $i=1,2,3$, 
since the thickness of $\Omega_i^\varepsilon$ is of the order of $\varepsilon$, it can be proved that there exists a constant $C$ independent of $\varepsilon$ such that
\begin{equation}
\left | \psi \right |_i \le C \varepsilon \left \| \psi \right \|_i \qquad (i=1,2,3)
\end{equation}
\end{remark}

Using Poincar\'e inequality \eqref{eq-poincare} and elementary calculus, we obtain the following result:
\begin{lemma}
There exist positive constants $C_k >0 \ (k \in \{ 1,2, \dots, 8 \})$, independent of $\varepsilon$, such that
\begin{align}
&C_1 \left | \psi \right |_j \le \left | \psi \right |_i \le C_2 \left | \psi \right |_j \qquad (i,j=1,2,3) \label{eq-ineq-1} \\ 
&C_3 \left \| \psi \right \|_j \le \left \| \psi \right \|_i \le C_4 \left \| \psi \right \|_j \qquad (i,j=1,2,3) \label{eq-ineq-2} \\
&\frac{C_5}{\sqrt{\varepsilon}} \left | \psi \right |_j \le \left | \psi \right |_0 \le 
\frac{C_6}{\sqrt{\varepsilon}} \left | \psi \right |_j \qquad (j=1,2,3) \label{eq-ineq-3} \\
&C_7\sqrt{\varepsilon} \left \| \psi \right \|_j \le \left \| \psi \right \|_0 \le 
\frac{C_8}{\sqrt{\varepsilon}} \left \| \psi \right \|_j \qquad (j=1,2,3) \label{eq-ineq-4}
\end{align}
for any $\psi \in V_i \ (i=1,2,3)$.
\end{lemma}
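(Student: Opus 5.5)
The plan is to compare integrals and derivatives under each elementary change of variables $\Phi_1^\varepsilon,\Phi_2^\varepsilon,\Phi_3^\varepsilon$ separately, and then compose the resulting inequalities. For each map we compute the Jacobian determinant and the chain-rule matrix relating the gradients, and check which entries are bounded above and below uniformly in $\varepsilon$.

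\textbf{The maps $\Phi_2^\varepsilon$ and $\Phi_3^\varepsilon$ (inequalities \eqref{eq-ineq-1}--\eqref{eq-ineq-2}).}
For $\Phi_3^\varepsilon$ the Jacobian is $\rho^2\sin\varphi$. On $\Omega_2^\varepsilon$ we have $\rho\in(R,R+\varepsilon h_1)$, and by Remark~\ref{remark-bound-sin} $\sin\varphi\ge c_0$. Hence the Jacobian lies between $R^2c_0$ and $(R+h_1)^2$ for $\varepsilon\le1$, and this gives $|\psi|_2\sim|\psi|_3$.

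For gradients, $\nabla_x\psi=\partial_\rho\psi\,\vec e_\rho+\rho^{-1}\partial_\varphi\psi\,\vec e_\varphi+(\rho\sin\varphi)^{-1}\partial_\theta\psi\,\vec e_\theta$. All coefficients are bounded above and below, so $\|\psi\|_2\sim\|\psi\|_3$.

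For $\Phi_2^\varepsilon$ the Jacobian is $h\in[h_0,h_1]$. The chain rule gives $\partial_{\zeta_3}=h\,\partial_\rho$ and $\partial_{\zeta_\alpha}=\partial_\alpha+\zeta_3\,\partial_\alpha h\,\partial_\rho$ for $\alpha=1,2$. Since $\zeta_3\in(0,\varepsilon)$ and $h$ is smooth, this matrix and its inverse, $\partial_\rho=h^{-1}\partial_{\zeta_3}$ and $\partial_\alpha=\partial_{\zeta_\alpha}-\zeta_3h^{-1}\partial_\alpha h\,\partial_{\zeta_3}$, have uniformly bounded entries. Therefore $\|\psi\|_1\sim\|\psi\|_2$ and $|\psi|_1\sim|\psi|_2$. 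Composing these gives \eqref{eq-ineq-1}--\eqref{eq-ineq-2} for all $i,j\in\{1,2,3\}$. Poincaré is not even needed at this stage.

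\textbf{The map $\Phi_1^\varepsilon$ (inequalities \eqref{eq-ineq-3}--\eqref{eq-ineq-4}).}
Here $d\vec\zeta=\varepsilon\,d\vec\xi$, so $|\psi|_1^2=\varepsilon|\psi|_0^2$ exactly. Combined with \eqref{eq-ineq-1}, this gives \eqref{eq-ineq-3}.

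For the gradients, $\partial_{\xi_\alpha}=\partial_{\zeta_\alpha}$ and $\partial_{\xi_3}=\varepsilon\partial_{\zeta_3}$. This yields
$$\|\psi\|_0^2=\varepsilon^{-1}\int_{\Omega_1^\varepsilon}\Big(\sum_{\alpha=1}^2(\partial_{\zeta_\alpha}\psi)^2+\varepsilon^2(\partial_{\zeta_3}\psi)^2\Big)d\vec\zeta .$$
Since $\varepsilon\le1$, the right side is at most $\varepsilon^{-1}\|\psi\|_1^2$, which gives the upper bound with $C_8/\sqrt\varepsilon$.

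The lower bound is the main obstacle: from the identity we only get $\|\psi\|_0^2\ge\varepsilon\|\psi\|_1^2$, since each term carries at least a factor $\varepsilon^2$ inside the integral. That is exactly the claimed bound $C_7\sqrt\varepsilon\,\|\psi\|_1\le\|\psi\|_0$, so it holds as well. Translating to $j=2,3$ via \eqref{eq-ineq-2} finishes the argument.

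The thin-domain Poincaré inequality $|\psi|_i\le C\varepsilon\|\psi\|_i$ is only needed if one wants full $H^1$ norms instead of seminorms. In that case, for $\psi\in V_i$ (vanishing on $S_i^+$), it is obtained by integrating $\partial_{\zeta_3}\psi$ from $\zeta_3=\varepsilon$ together with Cauchy--Schwarz, and it lets the seminorm equivalences above be stated as norm equivalences.
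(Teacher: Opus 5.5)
Your proof is correct. It is the ``elementary calculus'' the paper alludes to, made explicit: you compare Jacobians and chain-rule matrices for $\Phi_3^\varepsilon$, $\Phi_2^\varepsilon$ and $\Phi_1^\varepsilon$ separately, and the powers of $\varepsilon$ come only from $\Phi_1^\varepsilon$, through the identity $|\psi|_1^2=\varepsilon|\psi|_0^2$ and the anisotropic weight $\varepsilon^{-1}\bigl(\ldots+\varepsilon^2(\partial_{\zeta_3}\psi)^2\bigr)$.

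The genuine difference is that the paper lists the Poincar\'e inequality \eqref{eq-poincare} as an ingredient, whereas you note that for a scalar function the chain rule produces no zeroth-order terms. So the inequalities hold with gradient seminorms for every $H^1$ function, and the hypothesis $\psi\in V_i$ is never used. You are right about this.

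What the paper's route buys is the vector version stated immediately afterwards and used in the convergence proof. There, the components on $\Omega_0,\Omega_1,\Omega_2$ are the spherical ones $(\psi_\varphi,\psi_\theta,\psi_\rho)$, while on $\Omega^\varepsilon$ they are Cartesian. Differentiating the moving frame then produces terms such as $\partial_\varphi\psi_\varphi=(\partial_\varphi\vec\psi)\cdot\vec e_\varphi-\psi_\rho$. Such zeroth-order terms can only be absorbed through $|\vec\psi|_i\le C\|\vec\psi\|_i$, which is where vanishing on $S_i^+$ enters. Your argument therefore fully proves the stated scalar lemma, but passing to the vector lemma is not automatic componentwise: you would need to add that Poincar\'e step.
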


Obviously, we have the same result for vector functions $\vec{\psi} \in (V_i)^3 \ (i=0,1,2,3)$, where we define:
\begin{align}
&\left | \vec{\psi} \right |_i = \left ( \sum_{j=1}^3 \left | \psi_j \right |_i^2 \right )^{1/2} \\
&\left \| \vec{\psi} \right \|_i = \left ( \sum_{j=1}^3 \left \| \psi_j \right \|_i^2 \right )^{1/2}
\end{align}
that is,
\begin{lemma}
There exist positive constants $C_k >0 \ (k \in \{ 1,2, \dots, 8 \})$, independent of $\varepsilon$, such that
\begin{align}
&C_1 \left | \vec{\psi} \right |_j \le \left | \vec{\psi} \right |_i \le C_2 \left | \vec{\psi} \right |_j \qquad (i,j=1,2,3) \label{eq-ineq-vec-1} \\ 
&C_3 \left \| \vec{\psi} \right \|_j \le \left \| \vec{\psi} \right \|_i \le C_4 \left \| \vec{\psi} \right \|_j \qquad (i,j=1,2,3) \label{eq-ineq-vec-2} \\
&\frac{C_5}{\sqrt{\varepsilon}} \left | \vec{\psi} \right |_j \le \left | \vec{\psi} \right |_0 \le 
\frac{C_6}{\sqrt{\varepsilon}} \left | \vec{\psi} \right |_j \qquad (j=1,2,3) \label{eq-ineq-vec-3} \\
&C_7\sqrt{\varepsilon} \left \| \vec{\psi} \right \|_j \le \left \| \vec{\psi} \right \|_0 \le 
\frac{C_8}{\sqrt{\varepsilon}} \left \| \vec{\psi} \right \|_j \qquad (j=1,2,3) \label{eq-ineq-vec-4}
\end{align}
for any $\vec{\psi} \in (V_i)^3 \ (i=1,2,3)$.
\end{lemma}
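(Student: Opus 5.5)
The plan is to reduce the vector statement to the scalar Lemma stated just before it. We apply that Lemma to each Cartesian component $\psi_k$ ($k=1,2,3$), square the inequalities, and sum over $k$. By definition,
\begin{equation*}
\left | \vec{\psi} \right |_i^2=\sum_k \left | \psi_k \right |_i^2,
\qquad
\left \| \vec{\psi} \right \|_i^2=\sum_k \left \| \psi_k \right \|_i^2 .
\end{equation*}
So each two-sided scalar bound $a\,|\psi_k|_j\le|\psi_k|_i\le b\,|\psi_k|_j$ gives $a^2\sum_k|\psi_k|_j^2\le\sum_k|\psi_k|_i^2\le b^2\sum_k|\psi_k|_j^2$. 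Taking square roots yields \eqref{eq-ineq-vec-1}--\eqref{eq-ineq-vec-4} with the same constants $C_1,\dots,C_8$. The hypothesis $\vec{\psi}\in(V_i)^3$ means precisely that each component lies in $V_i$ and vanishes on $S_i^+$, so the scalar Lemma applies to each $\psi_k$.

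One point needs care. Here the components are taken in a fixed basis, namely the components $\psi_k$ of the same function transported to each $\Omega_i^\varepsilon$ by composition, exactly as for scalar functions. If instead one wished $|\vec\psi|_2$ to be computed from the spherical components $(\psi_\varphi,\psi_\theta,\psi_\rho)$, we would add one step. The basis $\{\vec e_\varphi,\vec e_\theta,\vec e_\rho\}$ in \eqref{base_e_phi}--\eqref{base_e_rho} is orthonormal, so pointwise Euclidean norms agree, and the $L^2$ norms coincide. For the gradient norms, differentiating $\psi_\varphi=\vec\psi\cdot\vec e_\varphi$ produces extra terms $\vec\psi\cdot\partial\vec e_\varphi$. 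These are bounded by $C|\vec\psi|$ because the basis vectors depend smoothly on $(\xi_1,\xi_2)$ only, and they can be absorbed using the Poincar\'e inequality \eqref{eq-poincare}, possibly after adjusting the constants.

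The main obstacle, if one insists on reproving the scalar Lemma rather than assuming it, lies in \eqref{eq-ineq-4}. There the Jacobians of $\Phi_1^\varepsilon$, $\Phi_2^\varepsilon$ and $\Phi_3^\varepsilon$ are computed, with determinants $\varepsilon$, $h$ and $\rho^2\sin\varphi$. The latter is bounded below by Remark~\ref{remark-bound-sin}. The chain rule for $\partial_{\xi_3}=\varepsilon h\,\partial_\rho$ and for $\partial_{\xi_1},\partial_{\xi_2}$ then produces terms involving $\xi_3\partial_{\xi_\alpha}h\,\partial_{\rho}$, which are handled by boundedness of $h$ and its derivatives. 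Since that Lemma is assumed here, the vector case is a direct componentwise consequence.
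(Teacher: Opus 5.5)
Your proposal is correct and matches the paper's argument: the paper simply calls the vector version obvious from the scalar lemma, which is exactly your componentwise square, sum and square-root reduction with the same constants $C_1,\dots,C_8$. Your extra remark on spherical versus Cartesian components is also sound, and it covers a point the paper leaves implicit, since the paper later applies \eqref{eq-ineq-vec-2} to $(u_\varphi^\varepsilon,u_\theta^\varepsilon,u_\rho^\varepsilon)$; the extra terms $\vec\psi\cdot\partial\vec e_\varphi$ (and the analogous ones) are bounded by $C|\vec\psi|$ and absorbed via the Poincar\'e inequality, because the spherical components also vanish on $S_i^+$.
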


\section{Boundary conditions and divergence free field} \label{section-step-4}

The objective of this section is to prove the existence of a divergence free lifting of the boundary condition \eqref{eq-Stokes-3}, whose norm admits an explicit bound in terms of $\varepsilon$. Specifically, we establish the existence of at least one solution $\vec{v}^\varepsilon \in \left ( H^1(\Omega^\varepsilon) \right )^3$ of 
\begin{align}
\diver \vec{v}^\varepsilon &=  0 \quad \textrm{in } \Omega^\varepsilon \label{eq-divergence-1} \\
\vec{v}^\varepsilon &= \vec{g}^\varepsilon \quad \textrm{on } \partial \Omega^\varepsilon \label{eq-divergence-2}
\end{align}
such that 
\begin{equation}
{\| \vec{v}^\varepsilon \|}_{\left ( H^1(\Omega^\varepsilon) \right )^3} \le \frac{C}{\sqrt{\varepsilon}}  \label{eq-estimate-1}
\end{equation}
where $C>0$ is independent of $\varepsilon$.

From Theorem \ref{theorem-existence-stokes-1}, we deduce the existence of a solution of problem 
\eqref{eq-divergence-1}--\eqref{eq-divergence-2} satisfying 
\begin{equation}
{\| \vec{v}^\varepsilon \|}_{\left ( H^1(\Omega^\varepsilon) \right )^3} \le C^\varepsilon {\| \vec{g}^\varepsilon \|}_{\left ( H^{1/2}(\partial \Omega^\varepsilon) \right )^3}
\end{equation}
with $C^\varepsilon$ depending on $\mu$ and $\Omega^\varepsilon$, that is, with $C^\varepsilon$ possibly dependent on $\varepsilon$. 

To prove \eqref{eq-estimate-1} we rewrite problem \eqref{eq-divergence-1}--\eqref{eq-divergence-2} in $\Omega$ and make additional assumptions on the boundary condition $\vec{g}^\varepsilon$. 

So far, we have only assumed that $\vec{g}^\varepsilon$ verifies \eqref{eq-cond-comp}. We now impose further hypotheses on  $\vec{g}^\varepsilon$ on $\partial \Omega^\varepsilon$.

Let us suppose that the inner sphere rotates about the $Oz$ axis with angular velocity $\omega$, while the outer sphere is stationary. In that case (for convenience, we specify the values of $\vec{g}^\varepsilon$ on the boundary of $\Omega$ in spherical coordinates):

\begin{align}
\vec{g}_{\partial \Omega}^\varepsilon &= R\omega \sin \xi_1 \vec{e}_\theta \textrm{ on } \xi_3 = 0, \label{eq-g-1}\\
\vec{g}_{\partial \Omega}^\varepsilon &= \vec{0}  \textrm{ on } \xi_3 = 1. \label{eq-g-2}
\end{align}

Since the sphere is closed in the $\theta$-direction, we have that 

\begin{equation}
\vec{g}_{\partial \Omega}^\varepsilon \textrm{ is $2\pi$ periodic in $\xi_2$-direction}. \label{eq-g-3}
\end{equation}

Finally, we assume that, on the surfaces $\varphi = \varphi_0$ and $\varphi = \varphi_1$, the fluid also rotates about the axis $Oz$, in a way that these boundary conditions are compatible with \eqref{eq-g-1}--\eqref{eq-g-2}. We allow for inflow and outflow through these two boundaries (supply conditions):
\begin{align}
\vec{g}_{\partial \Omega}^\varepsilon &= k_0^\varepsilon(\xi_2, \xi_3)\vec{e}_\varphi + R\omega \sin \xi_1 g_0(\xi_3)\vec{e}_\theta \textrm{ on } \xi_1 = \varphi_0,  
\label{eq-g-4} \\
\vec{g}_{\partial \Omega}^\varepsilon &= k_1^\varepsilon(\xi_2, \xi_3)\vec{e}_\varphi + R\omega \sin \xi_1 g_1(\xi_3)\vec{e}_\theta \textrm{ on } \xi_1 = \varphi_1, 
\label{eq-g-5}
\end{align}
where $g_0$ and $g_1$ are smooth functions of $\xi_3$, satisfying
\begin{equation}
g_0(0) = 1, \ g_0(1) = 0, \ g_1(0) = 1, \ g_1(1) = 0, \label{eq-g-5-b}
\end{equation}
and $k_0^\varepsilon$, $k_1^\varepsilon$ are smooth functions of $(\xi_2, \xi_3)$, 
$2\pi$ periodic in $\xi_2$, such that  
\begin{equation}
k_0^\varepsilon(\xi_2, 0) = k_0^\varepsilon(\xi_2, 1) = k_1^\varepsilon(\xi_2, 0) = k_1^\varepsilon(\xi_2, 1) = 0,
\label{eq-g-5-c}
\end{equation}
and 
\begin{equation}
| k_0^\varepsilon |_{L^\infty(\Omega^\varepsilon)} \le C,  \ | k_1^\varepsilon |_{L^\infty(\Omega^\varepsilon)} \le C. \label{eq-g-5-d}
\end{equation}

It is easy to check that, under hypotheses \eqref{eq-g-1}--\eqref{eq-g-5-d}, $\vec{g}^\varepsilon$ fulfills \eqref{eq-cond-comp}, if and only if 
\begin{equation}
\int_{\varphi=\varphi_1} k_1^\varepsilon \ dA - \int_{\varphi=\varphi_0} k_0^\varepsilon \ dA = 0 \label{eq-g-6} 
\end{equation}
that under a change of variables, can be written as 
\begin{align}
&\varepsilon \iint_{D_\varphi} \left [ h(\varphi_1,\xi_2) \left ( R + \varepsilon \xi_3 h(\varphi_1,\xi_2) \right ) 
k_1^\varepsilon(\xi_2, \xi_3) \sin \varphi_1 \right . \nonumber \\
&\quad{} \left . {} - 
h(\varphi_0,\xi_2) \left ( R + \varepsilon \xi_3 h(\varphi_0,\xi_2) \right ) 
k_0^\varepsilon(\xi_2, \xi_3) \sin \varphi_0 \right ] \, d\xi_2 d\xi_3 = 0 \label{eq-g-7} 
\end{align}
where 
\begin{equation}
D_\varphi = \left \{ (\xi_2, \xi_3) / \xi_2 \in (0, 2\pi), \xi_3 \in (0,1) \right \}. \label{eq-def-D-phi}
\end{equation}

Consequently, if $\vec{g}^\varepsilon$ verifies \eqref{eq-g-1}--\eqref{eq-g-5-d}, \eqref{eq-cond-comp}, \eqref{eq-g-6} and \eqref{eq-g-7} are equivalent.

\begin{lemma} \label{lemma-K}
Let us assume \eqref{eq-g-1}--\eqref{eq-g-5-d} and \eqref{eq-g-6}. Then 
there exists $\vec{K} \in \left ( H^1(\Omega) \right )^3$ such that 
\begin{align}
\diver \vec{K} &=  0 \quad \textrm{in } \Omega \label{eq-divergence-K-1} \\
\vec{K} &= \vec{G}^\varepsilon \quad \textrm{on } \partial \Omega \label{eq-divergence-K-2}
\end{align}
and 
\begin{align}
&{\| \vec{K} \|}_0 \le C_1 {\| \vec{G}^\varepsilon \|}_{\left ( H^{1/2}(\partial \Omega) \right )^3} \label{eq-estimate-K-1} \\
&{\| \vec{K} \|}_0 \le C_2  \label{eq-estimate-K-2} 
\end{align}
where $C_1>0$ and $C_2>0$ are independent of $\varepsilon$, and where 
\begin{align}
\vec{G}^\varepsilon &= R^2 h \omega \sin\xi_1 \vec{e}_\theta \textrm{ on } \xi_3 = 0 \label{eq-G-1}\\
\vec{G}^\varepsilon &= \vec{0}  \textrm{ on } \xi_3 = 1 \label{eq-G-2} \\
\vec{G}^\varepsilon &\textrm{ is $2\pi$ periodic in $\xi_2$} \label{eq-G-3} \\
\vec{G}^\varepsilon &= (R+\varepsilon \xi_3 h) k_0^\varepsilon \sin \varphi_0 
\left [ h \vec{e}_\varphi - \xi_3 \frac{\partial h}{\partial \xi_1} \vec{e}_\rho \right ] \nonumber \\
&{} + (R+\varepsilon \xi_3 h) R\omega g_0(\xi_3) \sin \varphi_0 \left [ h \vec{e}_\theta 
- \xi_3 \frac{\partial h}{\partial \xi_2} \vec{e}_\rho \right ] 
\textrm{ on } \xi_1 = \varphi_0 \label{eq-G-4} \\
\vec{G}^\varepsilon &= (R+\varepsilon \xi_3 h) k_1^\varepsilon \sin \varphi_1 
\left [ h \vec{e}_\varphi - \xi_3 \frac{\partial h}{\partial \xi_1} \vec{e}_\rho \right ] \nonumber \\
&{} + (R+\varepsilon \xi_3 h) R\omega g_1(\xi_3) \sin \varphi_1 \left [ h \vec{e}_\theta 
- \xi_3 \frac{\partial h}{\partial \xi_2} \vec{e}_\rho \right ] 
\textrm{ on } \xi_1 = \varphi_1 \label{eq-G-5} 
\end{align}
\end{lemma}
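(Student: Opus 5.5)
The plan is to treat the problem on the fixed domain $\Omega = D\times(0,1)$, where the boundary datum $\vec{G}^\varepsilon$ depends on $\varepsilon$ only through the bounded factor $(R+\varepsilon\xi_3 h)$ and the functions $k_0^\varepsilon, k_1^\varepsilon$, which are uniformly bounded by \eqref{eq-g-5-d}. I will invoke the classical divergence-lifting result (the one underlying Theorem~\ref{theorem-existence-stokes-1}, cf.\ \cite{NSTemam}), but on the $\varepsilon$-independent domain $\Omega$, with the $2\pi$-periodicity in $\xi_2$ built in. Concretely, I identify $\xi_2\in(0,2\pi)$ with the circle, so that $\Omega$ becomes the Lipschitz domain $(\varphi_0,\varphi_1)\times S^1\times(0,1)$ whose boundary consists only of the faces $\xi_1=\varphi_0,\varphi_1$ and $\xi_3=0,1$. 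On such a domain there is a bounded linear lifting operator $\vec{G}\mapsto\vec{K}$ from $\{\vec{G}\in (H^{1/2}(\partial\Omega))^3 : \int_{\partial\Omega}\vec{G}\cdot\vec{n}\,dA=0\}$ into divergence-free $(H^1(\Omega))^3$ fields. Its norm $C_1$ depends only on $\Omega$, so it is independent of $\varepsilon$. Combined with the Poincar\'e inequality \eqref{eq-poincare} (with $i=0$; on the face $\xi_3=1$ we have $\vec{K}=\vec 0$), the $H^1$ bound gives \eqref{eq-estimate-K-1} in the seminorm $\|\cdot\|_0$.

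The steps, in order, are as follows. First, I check that $\vec{G}^\varepsilon$ is well defined in $(H^{1/2}(\partial\Omega))^3$. On each face it is a product of smooth functions; at the edges $\xi_3=0$ and $\xi_3=1$ the lateral data must match the data on the top and bottom faces. This is where \eqref{eq-g-5-b} and \eqref{eq-g-5-c} enter: at $\xi_3=0$ the lateral data reduce to $R\,h\,R\omega g_0(0)\sin\varphi_0\,\vec{e}_\theta$, matching \eqref{eq-G-1}, and at $\xi_3=1$ they vanish. With this compatibility, piecewise smooth data that are continuous across the edges lie in $H^{1/2}$.

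Second, I verify the flux condition $\int_{\partial\Omega}\vec{G}^\varepsilon\cdot\vec n\,dA=0$ in the $\xi$-variables, whose outward normals are $\pm\vec e_\varphi$ on the lateral faces and $\pm\vec e_\rho$ on the top and bottom. Interpreted this way, $\vec{G}^\varepsilon$ is the Piola-type transform of $\vec{g}^\varepsilon$ (components multiplied by $\varepsilon^{-1}$ times the Jacobian), which is exactly why the factors $(R+\varepsilon\xi_3h)h\sin\varphi_i$ appear. The top and bottom contribute nothing because the normal component vanishes there. The $\vec e_\theta$ parts and the $\vec e_\rho$ corrections do not contribute on the lateral faces. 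What remains is the difference of the lateral integrals of $(R+\varepsilon\xi_3h)h k_i^\varepsilon\sin\varphi_i$, which vanishes by \eqref{eq-g-7}, equivalently by \eqref{eq-g-6}.

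Third, I apply the lifting, obtaining $\vec{K}$ satisfying \eqref{eq-divergence-K-1}--\eqref{eq-estimate-K-1}. Finally, \eqref{eq-estimate-K-2} follows from $\|\vec{G}^\varepsilon\|_{H^{1/2}(\partial\Omega)}\le C$ uniformly in $\varepsilon$. To get this, I bound the $H^{1/2}$ norm by an $H^1$ (or $W^{1,\infty}$) norm on each face. For this I need bounds on $k_i^\varepsilon$ in a norm stronger than $L^\infty$, and I will assume (or read into ``smooth'' in \eqref{eq-g-5-d}) that their derivatives are uniformly bounded too. This uniformity is the main obstacle: \eqref{eq-g-5-d} as stated gives only $L^\infty$ control. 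My first attempt will be to interpret the smoothness hypothesis as giving uniform $H^{1/2}$ bounds. Failing that, I will add that hypothesis explicitly and note that every remaining ingredient ($h$, $g_i$, the basis vectors, and the factor $R+\varepsilon\xi_3 h\le R+h_1$) is smooth and bounded independently of $\varepsilon\le 1$.
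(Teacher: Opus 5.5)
Your route is the paper's. The paper also works on the fixed domain $\Omega$ and identifies $\vec G^\varepsilon$ as the boundary value of the flux field $\vec J^\varepsilon$. It checks $\int_{\partial\Omega}\vec G^\varepsilon\cdot\vec n\,dA=0$ from \eqref{eq-g-7}, then applies Theorem~\ref{theorem-existence-stokes-1} on $\Omega$ to get \eqref{eq-estimate-K-1} with an $\varepsilon$-independent constant. Finally it derives \eqref{eq-estimate-K-2} from the remark that $\vec G^\varepsilon$ is bounded. Your edge-compatibility check via \eqref{eq-g-5-b}--\eqref{eq-g-5-c}, and your handling of the $\xi_2$-periodicity, supply details the paper leaves implicit; periodicity is needed for $\vec r^\varepsilon$ to lie in $H^1(\Omega^\varepsilon)$ later. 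One small point: Poincar\'e is not needed for \eqref{eq-estimate-K-1}, since $\|\vec K\|_0\le\|\vec K\|_{(H^1(\Omega))^3}$ trivially.

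The obstacle you flag is genuine, and the paper's proof has the same gap. ``Bounded'' there is only an $L^\infty$ statement, which gives no control in $H^{1/2}$. Your first option cannot work: smoothness of each $k_i^\varepsilon$ is qualitative and carries no uniformity in $\varepsilon$. In fact \eqref{eq-estimate-K-2} is false under \eqref{eq-g-5-d} alone.
\begin{itemize}
\item Any admissible $\vec K$ vanishes on $\xi_3=1$, so the trace theorem together with \eqref{eq-poincare} gives $\|\vec G^\varepsilon\|_{(H^{1/2}(\partial\Omega))^3}\le C\|\vec K\|_0$.
\item Take $k_1^\varepsilon=0$ and $k_0^\varepsilon=\sin(\pi\xi_3)\sin(n_\varepsilon\xi_2)/\bigl[h\,(R+\varepsilon\xi_3 h)\bigr]$ evaluated at $\xi_1=\varphi_0$, with $n_\varepsilon\to\infty$. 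Then \eqref{eq-g-5-c}, \eqref{eq-g-5-d} and \eqref{eq-g-7} all hold.
\item The $\vec e_\varphi$-component of $\vec G^\varepsilon$ on $\xi_1=\varphi_0$ is then $\sin\varphi_0\sin(\pi\xi_3)\sin(n_\varepsilon\xi_2)$. Its $H^{1/2}$ norm grows like $\sqrt{n_\varepsilon}$, so $\|\vec K\|_0\to\infty$.
\end{itemize}

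So the extra hypothesis must be stated explicitly, not read into the word ``smooth''. A uniform bound on $k_i^\varepsilon$ in $H^1(D_\varphi)$ is enough. Combined with $k_i^\varepsilon=0$ at $\xi_3=0,1$, it gives a uniform bound on $\vec G^\varepsilon$ in $H^1(\partial\Omega)\subset H^{1/2}(\partial\Omega)$, since $\vec G^\varepsilon$ is piecewise $H^1$ with matching traces across the edges. This bound follows from the convergence \eqref{eq-conv-k-0-1}, which the paper only assumes in Section~\ref{section-step-7}. Your $W^{1,\infty}$ assumption is more than is needed.
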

\begin{proof}
In a first step, we write \eqref{eq-divergence-1} in spherical coordinates in $\Omega_2^\varepsilon$ (namely, in $(\varphi, \theta, \rho)$-coordinates), 
\begin{equation}
\frac{1}{\rho^2}\frac{\partial}{\partial \rho} \left ( \rho^2 v_\rho^\varepsilon \right ) + 
\frac{1}{\rho \sin \varphi}\frac{\partial}{\partial \varphi} \left ( v_\varphi^\varepsilon \sin \varphi \right ) + 
\frac{1}{\rho \sin \varphi}\frac{\partial v_\theta^\varepsilon}{\partial \theta} = 0 \label{eq-divergence-1-sph-b}
\end{equation}
and then we rewrite \eqref{eq-divergence-1-sph-b} in $(\xi_1, \xi_2, \xi_3)$-coordinates, that is, in $\Omega$, 
\begin{align}
&\frac{1}{\varepsilon h (R+\varepsilon \xi_3 h)^2}\frac{\partial}{\partial \xi_3} \left [ (R+\varepsilon \xi_3 h)^2 v_{\rho, \Omega}^\varepsilon \right ] \nonumber \\
&\quad {} + \frac{1}{(R+\varepsilon \xi_3 h) \sin \xi_1}\left [ \frac{\partial}{\partial \xi_1} \left ( v_{\varphi, \Omega}^\varepsilon \sin \xi_1 \right ) - \frac{\xi_3}{h}\frac{\partial h}{\partial \xi_1} \frac{\partial}{\partial \xi_3} \left ( v_{\varphi, \Omega}^\varepsilon \sin \xi_1 \right ) \right ] \nonumber \\
&\quad {} + 
\frac{1}{(R+\varepsilon \xi_3 h) \sin \xi_1}\left [ \frac{\partial v_{\theta, \Omega}^\varepsilon}{\partial \xi_2} 
- \frac{\xi_3}{h}\frac{\partial h}{\partial \xi_2} \frac{\partial v_{\theta, \Omega}^\varepsilon}{\partial \xi_3} \right ]
= 0 \label{eq-continuity-omega}
\end{align}

This last equation can be rewritten as
\begin{equation}
\frac{\partial J_1^\varepsilon}{\partial \xi_1} + \frac{\partial J_2^\varepsilon}{\partial \xi_2} 
+ \frac{\partial J_3^\varepsilon}{\partial \xi_3} = 0 
\qquad \textrm{in}\ \Omega \label{eq-divergence-omega}
\end{equation}
where
\begin{align}
J_1^\varepsilon &= h (R+\varepsilon \xi_3 h) v_{\varphi, \Omega}^\varepsilon \sin \xi_1 \label{eq-J-1} \\
J_2^\varepsilon &= h (R+\varepsilon \xi_3 h) v_{\theta, \Omega}^\varepsilon \label{eq-J-2} \\
J_3^\varepsilon &= \frac{1}{\varepsilon}(R+\varepsilon \xi_3 h)^2 v_{\rho, \Omega}^\varepsilon \sin \xi_1 
- \xi_3 (R+\varepsilon \xi_3 h) \frac{\partial h}{\partial \xi_1} v_{\varphi, \Omega}^\varepsilon \sin \xi_1 \nonumber \\
&\quad {} - \xi_3 (R+\varepsilon \xi_3 h) \frac{\partial h}{\partial \xi_2} v_{\theta, \Omega}^\varepsilon \label{eq-J-3}
\end{align}

Let us now observe that $\vec{G}^\varepsilon = \vec{J}^\varepsilon_{| \partial \Omega}$. 
From \eqref{eq-G-1}--\eqref{eq-G-5} and \eqref{eq-g-7}, we obtain that 
\begin{align}
&  \int_{\partial \Omega} \vec{J}^\varepsilon \cdot  \vec{n}\, dA =  \int_{\partial \Omega} \vec{G}^\varepsilon \cdot  \vec{n}\, dA \nonumber \\ 
& = \varepsilon \iint_{D_\varphi} \left [ h(\varphi_1,\xi_2) \left ( R + \varepsilon \xi_3 h(\varphi_1,\xi_2) \right ) 
k_1^\varepsilon(\xi_2, \xi_3) \sin \varphi_1 \right . \nonumber \\
&\quad{} \left . {} - 
h(\varphi_0,\xi_2) \left ( R + \varepsilon \xi_3 h(\varphi_0,\xi_2) \right ) 
k_0^\varepsilon(\xi_2, \xi_3) \sin \varphi_0 \right ] \, d\xi_2 d\xi_3 = 0 \label{eq-comp-G}
\end{align}

By \eqref{eq-divergence-omega} and \eqref{eq-J-1}--\eqref{eq-J-3}, $\vec{J}^\varepsilon$ satisfies
\eqref{eq-divergence-K-1}--\eqref{eq-divergence-K-2}. Moreover, 
as a consequence of \eqref{eq-comp-G}, Theorem \ref{theorem-existence-stokes-1} and the fact that $\Omega$ does not depend on $\varepsilon$, we deduce \eqref{eq-estimate-K-1}. Finally, from \eqref{eq-G-1}--\eqref{eq-G-5} we have that $\vec{G}^\varepsilon$ is bounded, which yields \eqref{eq-estimate-K-2}.

\end{proof}

\begin{theorem}
Let us assume \eqref{eq-g-1}--\eqref{eq-g-6}. Then there exists 
$\vec{r}^\varepsilon \in \left ( H^1(\Omega^\varepsilon) \right )^3$ satisfying 
\begin{align}
\diver \vec{r}^\varepsilon &=  0 \quad \textrm{in } \Omega^\varepsilon \label{eq-divergence-r-1} \\
\vec{r}^\varepsilon &= \vec{g}^\varepsilon \quad \textrm{on } \partial \Omega^\varepsilon \label{eq-divergence-r-2}
\end{align}
and 
\begin{equation}
\left \| \vec{r}^\varepsilon \right \|_3 \le \frac{C}{\sqrt{\varepsilon}} \label{eq-estimate-r}
\end{equation}
\end{theorem}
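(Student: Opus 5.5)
The plan is to take the field $\vec{K}$ given by Lemma~\ref{lemma-K} and map it back to $\Omega^\varepsilon$ by inverting the relations \eqref{eq-J-1}--\eqref{eq-J-3} between $\vec{J}^\varepsilon$ and the spherical components of the velocity. Concretely, writing $\vec{K}=(K_1,K_2,K_3)$, we define on $\Omega$
\begin{align*}
r_{\varphi,\Omega}^\varepsilon &= \frac{K_1}{h(R+\varepsilon\xi_3 h)\sin\xi_1}, \qquad
r_{\theta,\Omega}^\varepsilon = \frac{K_2}{h(R+\varepsilon\xi_3 h)}, \\
r_{\rho,\Omega}^\varepsilon &= \frac{\varepsilon}{(R+\varepsilon\xi_3 h)^2\sin\xi_1}\left[ K_3 + \frac{\xi_3}{h}\frac{\partial h}{\partial \xi_1} K_1 + \frac{\xi_3}{h}\frac{\partial h}{\partial \xi_2} K_2 \right].
\end{align*}
We then set $\vec{r}^\varepsilon = r_\varphi^\varepsilon \vec{e}_\varphi + r_\theta^\varepsilon\vec{e}_\theta + r_\rho^\varepsilon\vec{e}_\rho$ in $\Omega^\varepsilon$ via $\Phi^\varepsilon$. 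The denominators are harmless because $h\ge h_0>0$, $R+\varepsilon\xi_3h\ge R$, and $\sin\xi_1\ge c_0$ by Remark~\ref{remark-bound-sin}. Since $h$ is smooth, all multiplying coefficients and their derivatives are bounded independently of $\varepsilon$ (for $\varepsilon\le1$).

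Step one checks that $\vec{r}^\varepsilon$ is divergence free and has trace $\vec{g}^\varepsilon$. By construction, the $\vec{J}^\varepsilon$ associated with $\vec{r}^\varepsilon$ through \eqref{eq-J-1}--\eqref{eq-J-3} is exactly $\vec{K}$. The computation leading to \eqref{eq-divergence-omega} shows that $\diver\vec{r}^\varepsilon=0$ in $\Omega^\varepsilon$ is equivalent to $\partial_{\xi_j}K_j=0$, which is \eqref{eq-divergence-K-1}. The map is invertible and pointwise linear, and the definition of $\vec{G}^\varepsilon$ in \eqref{eq-G-1}--\eqref{eq-G-5} is precisely the image of $\vec{g}^\varepsilon$ under it. Hence $\vec{K}=\vec{G}^\varepsilon$ on $\partial\Omega$ gives $\vec{r}^\varepsilon=\vec{g}^\varepsilon$ on $\partial\Omega^\varepsilon$; on $\xi_3=0$, for instance, $K_2/(hR)=R\omega\sin\xi_1$. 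Periodicity in $\xi_2$ is inherited from $\vec{K}$. The $H^1$ regularity follows from multiplying $H^1$ functions by smooth bounded coefficients.

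Step two, the estimate, is the main obstacle and amounts to careful scaling bookkeeping. We bound $\|\vec{r}^\varepsilon\|_2$ in the $(\varphi,\theta,\rho)$ variables, noting that the change of orthonormal basis to Cartesian components involves only smooth functions of $(\xi_1,\xi_2)$. Then \eqref{eq-ineq-2}, applied to $\Omega_3^\varepsilon$ versus $\Omega_2^\varepsilon$, transfers the bound up to constants. Those inequalities are stated for $V_i$, but the same elementary Jacobian argument holds for general $H^1$ functions, up to an extra $L^2$ term which is of lower order.

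Derivatives transform as $\partial_\rho = \frac{1}{\varepsilon h}\partial_{\xi_3}$ and $\partial_\varphi = \partial_{\xi_1} - \frac{\xi_3}{h}\partial_{\xi_1}h\,\partial_{\xi_3}$, and similarly for $\partial_\theta$; the volume element satisfies $d\vec{y}^{\,2} = \varepsilon h\, d\vec{\xi}$. For the tangential components $r_\varphi,r_\theta$, the worst term is $\partial_\rho$, giving $\int|\partial_\rho r_\varphi|^2 d\vec{y}^{\,2} \le C\varepsilon^{-2}\cdot\varepsilon \|\vec{K}\|_0^2$, which is $C/\varepsilon$. The remaining derivatives contribute $C\varepsilon$ or $C$ (from $\partial_{\xi_3}$ terms with coefficient $1$). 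For $r_\rho$, the explicit factor $\varepsilon$ cancels the $1/\varepsilon$ from $\partial_\rho$, so every derivative contributes at most $C\varepsilon$. Lower order terms coming from differentiating the coefficients are controlled by $|\vec{K}|_0\le C\|\vec{K}\|_0$, using the Poincaré inequality \eqref{eq-poincare} or simply the full $H^1(\Omega)$ norm from Lemma~\ref{lemma-K}. Collecting these bounds with \eqref{eq-estimate-K-2} yields $\|\vec{r}^\varepsilon\|_2^2\le C/\varepsilon$, and hence \eqref{eq-estimate-r}.
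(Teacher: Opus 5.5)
Your proposal is correct and follows essentially the paper's proof. The paper defines the same $\vec{R}^\varepsilon$ by inverting \eqref{eq-J-1}--\eqref{eq-J-3}, obtains divergence-freeness and the trace by running Lemma~\ref{lemma-K} backwards, and deduces \eqref{eq-estimate-r} from \eqref{eq-estimate-K-2} together with \eqref{eq-ineq-vec-4}; that inequality is exactly the $\varepsilon$-scaling you carry out by hand (your hedge about \eqref{eq-ineq-2} being stated only on $V_i$ is unnecessary, since $\vec{g}^\varepsilon=\vec{0}$ on $\xi_3=1$ already gives $\vec{r}^\varepsilon\in(V_3)^3$).
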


\begin{proof} Let $\vec{K} \in \left ( H^1(\Omega) \right )^3$ be given by Lemma \ref{lemma-K}, and 
let us consider the function defined on $\Omega$:
\begin{align}
\vec{R}^\varepsilon &= R_1^\varepsilon \vec{e}_\varphi + R_2^\varepsilon \vec{e}_\theta 
+ R_3^\varepsilon \vec{e}_\rho \label{eq-def-R-1} \\
R_1^\varepsilon &= \frac{K_1}{h (R+\varepsilon \xi_3 h) \sin \xi_1} \label{eq-def-R-2} \\
R_2^\varepsilon &= \frac{K_2}{h (R+\varepsilon \xi_3 h)} \label{eq-def-R-3} \\
R_3^\varepsilon &=\frac{\varepsilon}{(R+\varepsilon \xi_3 h)^2 \sin \xi_1} \left ( K_3 + 
\frac{\xi_3}{h}\frac{\partial h}{\partial \xi_1} K_1 + \frac{\xi_3}{h}\frac{\partial h}{\partial \xi_2} K_2 \right ) 
\label{eq-def-R-4}
\end{align}

We now introduce $\vec{R}^\varepsilon$ in $\Omega^\varepsilon$, that is, 
\begin{align}
\vec{r}^\varepsilon(\vec{x}) &= r_1^\varepsilon(\vec{x})\vec{e}_1 + r_2^\varepsilon(\vec{x})\vec{e}_2 
+ r_3^\varepsilon(\vec{x})\vec{e}_3 \quad \textrm{in } \Omega^\varepsilon \label{eq-def-r-1} \\
&=  R_1^\varepsilon(\vec{\xi})\vec{e}_\varphi + R_2^\varepsilon(\vec{\xi})\vec{e}_\theta 
+ R_3^\varepsilon(\vec{\xi})\vec{e}_\rho = \vec{R}^\varepsilon(\vec{\xi}) \quad \textrm{in } \Omega 
\label{eq-def-r-2}
\end{align}

To prove \eqref{eq-divergence-r-1}--\eqref{eq-divergence-r-2}, we follow the steps of Lemma \ref{lemma-K} in reverse order. 
Estimate \eqref{eq-estimate-r} follows from \eqref{eq-estimate-K-2}, the definitions 
\eqref{eq-def-R-1}--\eqref{eq-def-r-2}, and \eqref{eq-ineq-vec-4}.
\end{proof}

\section{Convergence of $\vec{u}^\varepsilon$} \label{section-step-5}

\begin{theorem}
Let $\vec{u}^\varepsilon$ be the solution of \eqref{eq-Stokes-1}--\eqref{eq-Stokes-3} with 
$\vec{g}^\varepsilon$ given by \eqref{eq-g-1}--\eqref{eq-g-5}, and let $\vec{r}^\varepsilon$ be defined by 
\eqref{eq-def-r-1}--\eqref{eq-def-r-2}. Then 
\begin{equation}
\left \| \vec{u}^\varepsilon \right \|_3 \le 2 \left \| \vec{r}^\varepsilon \right \|_3 \label{eq-estimate-u-r}
\end{equation}
\end{theorem}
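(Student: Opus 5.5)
We compare $\vec{u}^\varepsilon$ with the lifting $\vec{r}^\varepsilon$ through the weak formulation \eqref{eq-Stokes-var}. Set $\vec{w}^\varepsilon = \vec{u}^\varepsilon - \vec{r}^\varepsilon$. By \eqref{eq-Stokes-3} and \eqref{eq-divergence-r-2}, both fields have trace $\vec{g}^\varepsilon$ on $\partial\Omega^\varepsilon$. Hence $\vec{w}^\varepsilon \in \left( H^1_0(\Omega^\varepsilon)\right)^3$, and by \eqref{eq-Stokes-2} and \eqref{eq-divergence-r-1} it satisfies $\diver \vec{w}^\varepsilon = 0$. It is therefore an admissible test function in \eqref{eq-Stokes-var}, and the pressure term drops out:
\begin{equation*}
\mu \int_{\Omega^\varepsilon} \nabla \vec{u}^\varepsilon \cdot \nabla (\vec{u}^\varepsilon - \vec{r}^\varepsilon)\, d\vec{x} = \int_{\Omega^\varepsilon} p^\varepsilon \diver(\vec{u}^\varepsilon - \vec{r}^\varepsilon)\, d\vec{x} = 0 .
\end{equation*}
Dividing by $\mu>0$ and noting that the $x$-integral of $\nabla\vec{\psi}\cdot\nabla\vec{\psi}$ is exactly $\|\vec{\psi}\|_3^2$ (sum over components and Cartesian derivatives), this gives
\begin{equation*}
\left\| \vec{u}^\varepsilon \right\|_3^2 = \int_{\Omega^\varepsilon} \nabla \vec{u}^\varepsilon \cdot \nabla \vec{r}^\varepsilon \, d\vec{x} \le \left\| \vec{u}^\varepsilon \right\|_3 \left\| \vec{r}^\varepsilon \right\|_3
\end{equation*}
by Cauchy--Schwarz. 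If $\|\vec{u}^\varepsilon\|_3 = 0$ there is nothing to prove. Otherwise we divide and obtain $\|\vec{u}^\varepsilon\|_3 \le \|\vec{r}^\varepsilon\|_3$, which is even stronger than \eqref{eq-estimate-u-r}.

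An alternative argument produces the factor $2$ and is probably the one intended. Test with $\vec{w}^\varepsilon$ written as $\int \nabla\vec{w}^\varepsilon\cdot\nabla\vec{w}^\varepsilon = -\int \nabla \vec{r}^\varepsilon\cdot\nabla\vec{w}^\varepsilon$, which gives $\|\vec{w}^\varepsilon\|_3 \le \|\vec{r}^\varepsilon\|_3$. The triangle inequality $\|\vec{u}^\varepsilon\|_3 \le \|\vec{w}^\varepsilon\|_3 + \|\vec{r}^\varepsilon\|_3$ then yields the constant $2$. Either route suffices.

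The only delicate point is justifying that $\vec{w}^\varepsilon$ is an admissible test function. This requires that the $\vec{r}^\varepsilon$ constructed in the preceding theorem genuinely lies in $\left(H^1(\Omega^\varepsilon)\right)^3$ with the correct trace $\vec{g}^\varepsilon$. The regularity follows from $\vec{K}\in(H^1(\Omega))^3$, the smoothness and positive lower bounds of $h$, $R+\varepsilon\xi_3 h$ and $\sin\xi_1$ (Remark \ref{remark-bound-sin}), and the smooth, bounded change of variables $\Phi^\varepsilon$. The trace identity comes from $\vec{K}=\vec{G}^\varepsilon=\vec{J}^\varepsilon_{|\partial\Omega}$, by inverting the relations \eqref{eq-J-1}--\eqref{eq-J-3}. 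Both points are taken as established by the previous theorem. We also use that $\|\cdot\|_3$ is a norm on $H^1_0(\Omega^\varepsilon)$, which follows from the Poincaré inequality \eqref{eq-poincare}, although the argument only needs it as a seminorm.
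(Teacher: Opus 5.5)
Your argument is correct, and your second route is exactly the paper's proof: it writes the Stokes system for $\vec{w}^\varepsilon=\vec{u}^\varepsilon-\vec{r}^\varepsilon$, tests it with the divergence-free field $\vec{w}^\varepsilon\in(H^1_0(\Omega^\varepsilon))^3$ so the pressure drops out, gets $\|\vec{w}^\varepsilon\|_3\le\|\vec{r}^\varepsilon\|_3$ by Cauchy--Schwarz, and concludes with the triangle inequality. Your primary route rests on the same orthogonality relation $\int_{\Omega^\varepsilon}\nabla\vec{u}^\varepsilon\cdot\nabla\vec{w}^\varepsilon\,d\vec{x}=0$, but applies Cauchy--Schwarz to $\vec{u}^\varepsilon$ directly, which validly gives the sharper bound $\|\vec{u}^\varepsilon\|_3\le\|\vec{r}^\varepsilon\|_3$ (the Dirichlet-energy minimality of the Stokes velocity among divergence-free fields with trace $\vec{g}^\varepsilon$).
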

\begin{proof}
Let  us define $\vec{w}^\varepsilon = \vec{u}^\varepsilon - \vec{r}^\varepsilon$. From 
\eqref{eq-Stokes-1}--\eqref{eq-Stokes-3} and \eqref{eq-divergence-r-1}--\eqref{eq-divergence-r-2} 
we deduce that 
\begin{align}
- \mu \Delta \vec{w}^\varepsilon + \nabla p^\varepsilon &= \mu \Delta \vec{r}^\varepsilon \quad \textrm{in } \Omega^\varepsilon  \label{eq-Stokes-w-1} \\
\diver \vec{w}^\varepsilon &=  0 \quad \textrm{in } \Omega^\varepsilon \label{eq-Stokes-w-2} \\
\vec{w}^\varepsilon &= \vec{0} \quad \textrm{on } \partial \Omega^\varepsilon \label{eq-Stokes-w-3}
\end{align}
Multiplying \eqref{eq-Stokes-w-1} by $\vec{w}^\varepsilon$, integrating over $\Omega^\varepsilon$, and applying the divergence theorem, taking into account that $\vec{w}^\varepsilon$ satisfies \eqref{eq-Stokes-w-2}--\eqref{eq-Stokes-w-3}, we obtain 
\begin{equation}
\mu \int_{\Omega^\varepsilon} \nabla w^\varepsilon \cdot \nabla w^\varepsilon\, d\vec{x} = 
- \mu \int_{\Omega^\varepsilon} \nabla r^\varepsilon \cdot \nabla w^\varepsilon\, d\vec{x} \label{eq-estimate-w-r}
\end{equation}

From \eqref{eq-estimate-w-r} it follows that
\begin{equation}
\left \| \vec{w}^\varepsilon \right \|_3^2 \le \left \| \vec{r}^\varepsilon \right \|_3 \left \| \vec{w}^\varepsilon \right \|_3
\end{equation}
and consequently 
\begin{equation}
\left \| \vec{w}^\varepsilon \right \|_3 \le \left \| \vec{r}^\varepsilon \right \|_3 
\Rightarrow \left \| \vec{u}^\varepsilon - \vec{r}^\varepsilon \right \|_3 \le \left \| \vec{r}^\varepsilon \right \|_3 
\Rightarrow \left \| \vec{u}^\varepsilon \right \|_3 \le 2 \left \| \vec{r}^\varepsilon \right \|_3 \label{eq-estimate-u-r-bis}
\end{equation}


\end{proof}

Let us introduce the Hilbert space defined by
\begin{equation}
V^* = \left \{ v \in L^2(\Omega) / \frac{\partial v}{\partial \xi_3} \in L^2(\Omega), v=0 \textrm{ on } \xi_3 = 1 \right \}
\end{equation}
with the norm 
\begin{equation}
\left \| v \right \|_{V^*} = \left | \frac{\partial v}{\partial \xi_3} \right |_0 \label{eq-norm-V-*}
\end{equation}

\begin{remark} \label{Remark-ineq-Poicare-V-*}
Since $v = 0$ on $\xi_3=1$ for $v \in V^*$, we deduce the following Poincar\'e inequality: 
\begin{equation}
\left | v \right |_0 \le C \left | \frac{\partial v}{\partial \xi_3} \right |_0, \quad \forall v \in V^* \quad (C>0) 
\label{eq-ineq-Poincare-V-*}
\end{equation}
so \eqref{eq-norm-V-*} is indeed a norm in $V^*$, and the traces of $v \in V^*$ on $\xi_3=1$ and on $\xi_3=0$ are correctly defined, using the continuity of the trace operator from $V^*$ to $H^{-1/2}(S_0^+)$ and $H^{-1/2}(S_1^+)$ (see, for example, \cite{DautrayLionsV8} or \cite{Dumont78}).
\end{remark}

\begin{theorem}
There exist $\vec{u}^* \in \left ( V^* \right )^3$ and a subsequence of $\vec{u}^\varepsilon$ (still denoted by $\vec{u}^\varepsilon$) such that $\vec{u}^\varepsilon$ converges weakly in $\left ( V^* \right )^3$ to $\vec{u}^*$, that is, 
\begin{align}
&\vec{u}^\varepsilon \rightharpoonup \vec{u}^* \textrm{ in } \left ( L^2(\Omega) \right )^3-\textrm{weak} 
\label{eq-th-conv-1} \\
&\frac{\partial \vec{u}^\varepsilon}{\partial \xi_3} \rightharpoonup \frac{\partial \vec{u}^*}{\partial \xi_3} 
\textrm{ in } \left ( L^2(\Omega) \right )^3-\textrm{weak} \label{eq-th-conv-2} 
\end{align}
Moreover,
\begin{equation}
\varepsilon \frac{\partial \vec{u}^\varepsilon}{\partial \xi_i} \rightharpoonup \vec{0}  
\textrm{ in } \left ( L^2(\Omega) \right )^3-\textrm{weak} 
\quad (i=1,2)\label{eq-th-conv-4} 
\end{equation}
and 
\begin{align}
&\vec{u}^* = R\omega \sin \xi_1 \vec{e}_\theta \textrm{ on } \xi_3 = 0 \label{eq-th-conv-5} \\
&\vec{u}^* = \vec{0} \textrm{ on } \xi_3 = 1 \label{eq-th-conv-6} 
\end{align}
\end{theorem}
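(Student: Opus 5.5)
We combine the a priori estimate of the previous theorem with the norm equivalences of Section~\ref{section-step-3}. Transport the bound to $\Omega$ and extract weakly convergent subsequences there. Then identify the boundary values by continuity of traces.

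\textbf{Step 1: uniform bound in $\Omega$.} By \eqref{eq-estimate-u-r} and \eqref{eq-estimate-r}, $\|\vec{u}^\varepsilon\|_3 \le 2\|\vec{r}^\varepsilon\|_3 \le C/\sqrt{\varepsilon}$. By \eqref{eq-ineq-vec-2} the same bound holds for $\|\vec{u}^\varepsilon\|_1$ in the $\vec\zeta$ variables. Since $\vec{u}^\varepsilon - \vec{r}^\varepsilon \in (H_0^1)^3$, the Poincar\'e inequality in the thin domain gives $|\vec{u}^\varepsilon - \vec{r}^\varepsilon|_1 \le C\varepsilon\,\|\vec{u}^\varepsilon - \vec{r}^\varepsilon\|_1$. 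Hence $|\vec{u}^\varepsilon|_1 \le C\sqrt{\varepsilon}$, where $|\vec{r}^\varepsilon|_1 \le C\sqrt{\varepsilon}$ is read off directly from \eqref{eq-def-R-1}--\eqref{eq-def-R-4} and the bound on $\vec K$.

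I would work with the spherical components $(u_\varphi,u_\theta,u_\rho)$. The change of basis is orthogonal and smooth, with derivatives bounded independently of $\varepsilon$, since $\sin\xi_1\ge c_0$ by Remark~\ref{remark-bound-sin}. Thus the estimates hold for Cartesian and spherical components alike, and the convergences below can be stated for either.

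Since $\zeta_3=\varepsilon\xi_3$ and $\zeta_i=\xi_i$ for $i=1,2$, we have $d\vec\zeta=\varepsilon\, d\vec\xi$. Also
\[
\partial_{\xi_3}=\varepsilon\,\partial_{\zeta_3}, \qquad \partial_{\xi_i}=\partial_{\zeta_i}\quad (i=1,2).
\]
Therefore
\[
\Big|\frac{\partial \vec{u}^\varepsilon}{\partial \xi_3}\Big|_0=\sqrt{\varepsilon}\,\Big|\frac{\partial \vec{u}^\varepsilon}{\partial \zeta_3}\Big|_1\le C, \qquad
\varepsilon\Big|\frac{\partial \vec{u}^\varepsilon}{\partial \xi_i}\Big|_0=\sqrt{\varepsilon}\,\Big|\frac{\partial \vec{u}^\varepsilon}{\partial \zeta_i}\Big|_1\le C .
\]
Moreover $|\vec{u}^\varepsilon|_0=\varepsilon^{-1/2}|\vec{u}^\varepsilon|_1\le C$. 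Since $\vec{u}^\varepsilon=\vec 0$ on $\xi_3=1$, $\vec{u}^\varepsilon$ is bounded in $(V^*)^3$ (the $L^2$ bound also follows from \eqref{eq-ineq-Poincare-V-*}).

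\textbf{Step 2: extraction and the tangential derivatives.} $V^*$ is a Hilbert space, so a subsequence converges weakly in $(V^*)^3$ to some $\vec u^*$; this is \eqref{eq-th-conv-1}--\eqref{eq-th-conv-2}. For \eqref{eq-th-conv-4}, the family $\varepsilon\,\partial_{\xi_i}\vec{u}^\varepsilon$ is bounded in $L^2(\Omega)^3$, so it has a weak limit $\vec\chi$ along a further subsequence. To identify $\vec\chi$, test against $\vec\phi\in (C_c^\infty(\Omega))^3$:
\[
\int_\Omega \varepsilon\,\partial_{\xi_i}\vec{u}^\varepsilon\cdot\vec\phi \;=\; -\varepsilon\int_\Omega \vec{u}^\varepsilon\cdot\partial_{\xi_i}\vec\phi \;\to\; 0,
\]
because $\vec{u}^\varepsilon$ is bounded in $L^2$. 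Hence $\vec\chi=\vec 0$, and the whole subsequence converges.

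\textbf{Step 3: boundary values, the expected delicate point.} On $\xi_3=1$ every $\vec u^\varepsilon$ vanishes, and the trace map $V^*\to H^{-1/2}$ of Remark~\ref{Remark-ineq-Poicare-V-*} is linear and continuous, hence weakly continuous; this gives \eqref{eq-th-conv-6}. On $\xi_3=0$ the trace equals $R\omega\sin\xi_1\vec e_\theta$ for every $\varepsilon$, by \eqref{eq-g-1}, independent of $\varepsilon$. The same weak continuity then yields \eqref{eq-th-conv-5}, provided the trace on $\xi_3=0$ is understood in the same space (the $\vec e_\theta$ basis is smooth).

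I expect the main care to lie in Step 1: getting the correct $\varepsilon$-powers, in particular the $L^2$ bound $|\vec u^\varepsilon|_0\le C$, which needs the thin-domain Poincar\'e inequality rather than \eqref{eq-ineq-vec-3}–\eqref{eq-ineq-vec-4} alone. A secondary point is justifying the trace continuity in $V^*$, which only controls $\partial_{\xi_3}$. Here one can argue directly: for a.e.\ $(\xi_1,\xi_2)$, $v(\cdot,0)=-\int_0^1\partial_{\xi_3}v\,d\xi_3$. This shows the trace map is in fact continuous into $L^2(D)$, which makes the identification immediate.
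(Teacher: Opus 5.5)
Your proposal is correct and follows essentially the same route as the paper. It starts from $\|\vec u^\varepsilon\|_3\le C/\sqrt{\varepsilon}$, transfers it to $\Omega_1^\varepsilon$ via \eqref{eq-ineq-vec-2} and rescales to $\Omega$ to obtain \eqref{eq-estimate-derivatives}. It then extracts a weak limit in $(V^*)^3$, shows the limit of $\varepsilon\,\partial_{\xi_i}\vec u^\varepsilon$ is zero by integration by parts, and recovers the boundary values by continuity of the trace.

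The differences are minor. The paper gets the $L^2$ bound directly from the $V^*$ Poincar\'e inequality \eqref{eq-ineq-Poincare-V-*}, which is your parenthetical alternative and makes the detour through $\vec u^\varepsilon-\vec r^\varepsilon$ unnecessary. Your identity $v(\cdot,0)=-\int_0^1\partial_{\xi_3}v\,d\xi_3$ gives a cleaner trace justification, continuous into $L^2(D)$, than the paper's appeal to $H^{-1/2}$.
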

\begin{proof}
From \eqref{eq-estimate-u-r} and \eqref{eq-estimate-r} we derive
\begin{equation}
\left \| \vec{u}^\varepsilon \right \|_3 \le \frac{C}{\sqrt{\varepsilon}} \label{eq-estimate-u-3}
\end{equation}
and applying \eqref{eq-ineq-vec-2} we obtain 
\begin{equation}
\left \| \vec{u}^\varepsilon \right \|_1 \le \frac{C}{\sqrt{\varepsilon}} \label{eq-estimate-u-1}
\end{equation}
that is 
\begin{equation}
\sum_{i=1}^3 \left ( \left | \frac{\partial u_\varphi^\varepsilon}{\partial \zeta_i} \right |_1^2  
+ \left | \frac{\partial u_\theta^\varepsilon}{\partial \zeta_i} \right |_1^2  
+ \left | \frac{\partial u_\rho^\varepsilon}{\partial \zeta_i} \right |_1^2 \right ) 
\le \frac{C}{\varepsilon} 
\end{equation}
and then, after the change of variables to $\Omega_0$,
\begin{align}
&\sum_{i=1}^2  \left ( \varepsilon \left | \frac{\partial u_\varphi^\varepsilon}{\partial \xi_i} \right |_0^2  
+ \varepsilon \left | \frac{\partial u_\theta^\varepsilon}{\partial \xi_i} \right |_0^2  
+ \varepsilon \left | \frac{\partial u_\rho^\varepsilon}{\partial \xi_i} \right |_0^2 \right ) \nonumber \\
&\quad {} + \frac{1}{\varepsilon} \left ( \left | \frac{\partial u_\varphi^\varepsilon}{\partial \xi_3} \right |_0^2  
+ \left | \frac{\partial u_\theta^\varepsilon}{\partial \xi_3} \right |_0^2  
+ \left | \frac{\partial u_\rho^\varepsilon}{\partial \xi_3} \right |_0^2 \right ) 
\le \frac{C}{\varepsilon} \label{eq-estimate-Omega-0} 
\end{align}
From \eqref{eq-estimate-Omega-0}, we deduce that  
\begin{equation}
\left | \frac{\partial \vec{u}^\varepsilon}{\partial \xi_3} \right |_0 \le C, \quad 
\varepsilon \left | \frac{\partial \vec{u}^\varepsilon}{\partial \xi_1} \right |_0 \le C, \quad 
\varepsilon \left | \frac{\partial \vec{u}^\varepsilon}{\partial \xi_2} \right |_0 \le C \label{eq-estimate-derivatives}
\end{equation}
that implies the existence of a subsequence of $\vec{u}^\varepsilon$ (still denoted by $\vec{u}^\varepsilon$),  $\vec{u}^* \in \left ( V^* \right )^3$, $\vec{v}_1^* \in \left ( L^2(\Omega) \right )^3$ and $\vec{v}_2^* \in \left ( L^2(\Omega) \right )^3$, such that 
\begin{align}
&\vec{u}^\varepsilon \rightharpoonup \vec{u}^* \textrm{ in } \left ( V^* \right )^3-\textrm{weak} \label{eq-conv-1} \\
&\varepsilon \frac{\partial \vec{u}^\varepsilon}{\partial \xi_i} \rightharpoonup \vec{v}_i^*  
\textrm{ in } \left ( L^2(\Omega) \right )^3-\textrm{weak} 
\quad (i=1,2) \label{eq-conv-2}
\end{align}

From  \eqref{eq-conv-1} and \eqref{eq-g-1}--\eqref{eq-g-2} we obtain \eqref{eq-th-conv-1}--\eqref{eq-th-conv-2}. Moreover, using Remark~\ref{Remark-ineq-Poicare-V-*}, we also derive 
\eqref{eq-th-conv-5}--\eqref{eq-th-conv-6}.

To prove that $\vec{v}_1^* = \vec{0}$, let us consider $\vec{\psi} \in \left ( H_0^1(\Omega) \right )^3$. Then, from 
\eqref{eq-conv-2} it follows that
\begin{equation}
\int_\Omega \varepsilon \frac{\partial \vec{u}^\varepsilon}{\partial \xi_1} \cdot \vec{\psi}\, d\vec{x} \rightarrow 
\int_\Omega \vec{v}_1^* \cdot \vec{\psi}\, d\vec{x}, \quad \textrm{when } \varepsilon \rightarrow 0 \label{eq-conv-int-1}
\end{equation}
and from \eqref{eq-conv-1} we obtain 
\begin{equation}
\int_\Omega \varepsilon \frac{\partial \vec{u}^\varepsilon}{\partial \xi_1} \cdot \vec{\psi}\, d\vec{x} 
= - \int_\Omega \varepsilon \vec{u}^\varepsilon \cdot \frac{\partial \vec{\psi}}{\partial \xi_1}\, d\vec{x} 
\rightarrow 0,    \quad \textrm{when } \varepsilon \rightarrow 0 
\label{eq-conv-int-2}
\end{equation}

Finally, from \eqref{eq-conv-int-1}--\eqref{eq-conv-int-2}, we deduce that 
\begin{equation}
\int_\Omega \vec{v}_1^* \cdot \vec{\psi}\, d\vec{x} = 0, \quad \forall  \vec{\psi} \in \left ( H_0^1(\Omega) \right )^3 
\Rightarrow \vec{v}_1^* = \vec{0}
\end{equation}

Similarly, we show that $\vec{v}_2^* = \vec{0}$.
\end{proof}

\section{Convergence of $p^\varepsilon$} \label{section-step-6}

\begin{theorem}
There exist $p^* \in L^2(\Omega)$ and a subsequence of $p^\varepsilon$ (still denoted by 
$p^\varepsilon$) such that 
\begin{align}
&\varepsilon^2 p^\varepsilon \rightharpoonup p^* \textrm{ in } L^2(\Omega)-\textrm{weak} \label{eq-conv-p-1} \\
&\varepsilon^2 \frac{\partial p^\varepsilon}{\partial \xi_i} \rightharpoonup 
\frac{\partial p^*}{\partial \xi_i} \textrm{ in } H^{-1}(\Omega)-\textrm{weak} \quad (i=1,2) \label{eq-conv-p-2} \\
&\varepsilon^2 \frac{\partial p^\varepsilon}{\partial \xi_3} \rightharpoonup 
0 \textrm{ in } H^{-1}(\Omega)-\textrm{weak} \label{eq-conv-p-4} 
\end{align}

Furthermore, if we assume that $p^\varepsilon \in L_0^2(\Omega^\varepsilon)$ (in order to guarantee the uniqueness of the solution of \eqref{eq-Stokes-1}--\eqref{eq-Stokes-3}, or equivalently, of \eqref{eq-Stokes-var}), then
\begin{equation}
\int_D p^*\, h\, \sin \xi_1\, d\xi_1d\xi_2 = 0 
\label{eq-conv-int-p}
\end{equation}
\end{theorem}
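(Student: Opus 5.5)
The idea is to get a uniform bound on $\varepsilon^2 p^\varepsilon$ in $L^2(\Omega)$ from the weak formulation \eqref{eq-Stokes-var}, then pass to the limit.

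\textbf{Step 1: the bound on $\nabla p^\varepsilon$.} Transport \eqref{eq-Stokes-var} to $\Omega$ through $\Phi^\varepsilon$. The Jacobian is $\varepsilon h (R+\varepsilon\xi_3 h)^2\sin\xi_1$, and the chain rule gives $\partial_{x_j} = \sum_i a^\varepsilon_{ji}\partial_{\xi_i}$. The coefficients multiplying $\partial_{\xi_3}$ are $O(1/\varepsilon)$; the others are $O(1)$ and smooth with bounded inverses, by Remark~\ref{remark-bound-sin} and $h\ge h_0$. For $\vec\psi\in(H^1_0(\Omega))^3$, define the test field on $\Omega^\varepsilon$ by composition. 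Then \eqref{eq-Stokes-var} shows that the distribution $\nabla_x p^\varepsilon$, tested against $\vec\psi$, equals $\mu\int\nabla\vec u^\varepsilon\cdot\nabla\vec\psi$. By Cauchy--Schwarz and \eqref{eq-estimate-u-3},
\[
\Bigl|\int_{\Omega^\varepsilon} p^\varepsilon \diver\vec\psi\,d\vec x\Bigr|\le \mu \|\vec u^\varepsilon\|_3\|\vec\psi\|_3\le \frac{C}{\sqrt\varepsilon}\|\vec\psi\|_3 .
\]
Next express $\|\vec\psi\|_3$ in $\xi$-variables. We have $\|\vec\psi\|_3^2\le C\varepsilon\bigl(|\partial_{\xi_1}\vec\psi|_0^2+|\partial_{\xi_2}\vec\psi|_0^2+\varepsilon^{-2}|\partial_{\xi_3}\vec\psi|_0^2\bigr)\le C\varepsilon^{-1}\|\vec\psi\|_{H^1_0(\Omega)}^2$. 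The left side becomes $\varepsilon\int_\Omega p^\varepsilon\,(\text{Jacobian-weighted divergence in }\xi)$. Multiplying through by $\varepsilon$, we obtain
\[
\Bigl|\int_\Omega \varepsilon^2 p^\varepsilon\,\partial_{\xi_i}(\cdot)\Bigr|\le C\|\vec\psi\|_{H^1_0(\Omega)},\qquad i=1,2 .
\]
For $i=3$ the same bound holds with an extra factor $\varepsilon$, since the $\xi_3$ derivative carries $1/\varepsilon$. Choosing $\vec\psi$ along a single direction and absorbing the smooth coefficients, we conclude that $\nabla_\xi(\varepsilon^2 p^\varepsilon)$ is bounded in $(H^{-1}(\Omega))^3$, and its $\xi_3$ component is $O(\varepsilon)$. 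The lower-order terms coming from differentiating the coefficients are controlled by $|\varepsilon^2 p^\varepsilon|_0$ times small factors, or can be absorbed.

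\textbf{Step 2: the $L^2$ bound (main obstacle).} We need the Ne\v{c}as inequality on the fixed domain $\Omega$: $|q-\bar q|_0\le C\|\nabla q\|_{H^{-1}(\Omega)}$. Using the weighted mean $\int p^\varepsilon h(R+\varepsilon\xi_3h)^2\sin\xi_1=0$, which is exactly $p^\varepsilon\in L^2_0(\Omega^\varepsilon)$, we get $|\varepsilon^2p^\varepsilon|_0\le C$. This is where care is needed. The variable-coefficient terms from Step 1 must not spoil the estimate. I would handle this by applying Ne\v{c}as to $q=\varepsilon^2 p^\varepsilon$ directly, moving the coefficient derivatives to the right-hand side as $C|q|_{H^{-1}}$-type terms, and closing via compactness of $L^2\hookrightarrow H^{-1}$ plus a contradiction argument. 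An alternative is a Bogovskii-type construction: choose $\vec\psi$ with $\diver$ (in weighted form) equal to $q-\bar q$. Either route is valid because $\Omega$ is a fixed Lipschitz domain. The periodic identification in $\xi_2$ causes no trouble.

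\textbf{Step 3: passage to the limit.} Extract a subsequence with $\varepsilon^2p^\varepsilon\rightharpoonup p^*$ weakly in $L^2(\Omega)$, which is \eqref{eq-conv-p-1}. Since differentiation is continuous from $L^2$ to $H^{-1}$, this gives \eqref{eq-conv-p-2}. By Step 1 the $\xi_3$ derivative is $O(\varepsilon)$ in $H^{-1}$, so its limit vanishes, which is \eqref{eq-conv-p-4}; in particular $p^*$ is independent of $\xi_3$. Finally, the zero-mean condition $\int_\Omega \varepsilon^2p^\varepsilon\,h(R+\varepsilon\xi_3h)^2\sin\xi_1\,d\vec\xi=0$ passes to the limit, because the weight converges uniformly to $R^2h\sin\xi_1$. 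This gives $\int_\Omega p^* h\sin\xi_1=0$, and since $p^*$ does not depend on $\xi_3$, this is \eqref{eq-conv-int-p}.
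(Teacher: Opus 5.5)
Your proposal is correct and follows essentially the paper's route. You get $H^{-1}(\Omega)$ bounds of order $\varepsilon^{-2}$ on $\partial_{\xi_1}p^\varepsilon$ and $\partial_{\xi_2}p^\varepsilon$ and of order $\varepsilon^{-1}$ on $\partial_{\xi_3}p^\varepsilon$ from the transported weak formulation, then apply the Ne\v{c}as inequality on the fixed domain $\Omega$ and pass to the limit in the weighted zero-mean condition. Your single global Cauchy--Schwarz bound $\mu\|\vec u^\varepsilon\|_3\|\vec\psi\|_3$ is a more economical way to obtain these estimates than the paper's component-wise ones, and you make explicit the normalization Ne\v{c}as needs, which the paper leaves implicit.

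The compactness--contradiction device in Step~2 is unnecessary, and the lower-order terms you describe would carry $O(1)$ factors, not small ones. In fact no such terms arise: the Jacobian-weighted divergence is an exact $\xi$-divergence, as in \eqref{eq-divergence-omega}, so pairing $\nabla p^\varepsilon$ directly with smooth multiples of $\vec\psi$, as the paper does, produces no terms involving $p^\varepsilon$ itself.
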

\begin{proof}
Multiplying equation \eqref{eq-Stokes-1} by a test function $\vec{\psi} \in \left ( H_0^1(\Omega) \right )^3$ and integrating by parts, we obtain 
\begin{equation}
\mu \int_{\Omega^\varepsilon} \nabla \vec{u}^\varepsilon \cdot \nabla \vec{\psi} \, d\vec{x} 
+ \int_{\Omega^\varepsilon} \nabla p^\varepsilon \cdot \vec{\psi} \, d\vec{x} = 
0, \qquad 
\forall \vec{\psi} \in \left ( H^1_0(\Omega^\varepsilon) \right )^3 \label{eq-Stokes-variational}
\end{equation}

Rewriting problem \eqref{eq-Stokes-variational} in spherical coordinates, we get
\begin{align}
&\mu \int_{\Omega_2^\varepsilon} \Bigg [ \frac{\partial u_\rho^\varepsilon}{\partial \rho} 
\frac{\partial \psi_\rho}{\partial \rho} + \frac{1}{\rho^2} \left ( \frac{\partial u_\rho^\varepsilon}{\partial \varphi} 
- u_\varphi^\varepsilon \right ) \left ( \frac{\partial \psi_\rho}{\partial \varphi} - \psi_\varphi \right ) \nonumber \\
&\quad {} + \frac{1}{\rho^2} \left ( \frac{1}{\sin \varphi} \frac{\partial u_\rho^\varepsilon}{\partial \theta} - 
u_\theta^\varepsilon \right ) \left ( \frac{1}{\sin \varphi} \frac{\partial \psi_\rho}{\partial \theta} - 
\psi_\theta \right ) + \frac{\partial u_\varphi^\varepsilon}{\partial \rho} 
\frac{\partial \psi_\varphi}{\partial \rho} \nonumber \\
&\quad {} + \frac{1}{\rho^2} \left ( \frac{\partial u_\varphi^\varepsilon}{\partial \varphi} + 
u_\rho^\varepsilon \right ) \left ( \frac{\partial \psi_\varphi}{\partial \varphi} + 
\psi_\rho \right ) \nonumber \\
&\quad {} + \frac{1}{\rho^2} \left ( \frac{1}{\sin \varphi} \frac{\partial u_\varphi^\varepsilon}{\partial \theta} - 
(\cot \varphi) u_\theta^\varepsilon \right ) \left ( \frac{1}{\sin \varphi} \frac{\partial \psi_\varphi}{\partial \theta} - 
(\cot \varphi) \psi_\theta \right ) \nonumber \\
&\quad {} + \frac{\partial u_\theta^\varepsilon}{\partial \rho} 
\frac{\partial \psi_\theta}{\partial \rho} + \frac{1}{\rho^2} \frac{\partial u_\theta^\varepsilon}{\partial \varphi} 
\frac{\partial \psi_\theta}{\partial \varphi} \nonumber \\
&\quad {} + \frac{1}{\rho^2} \left ( \frac{1}{\sin \varphi} \frac{\partial u_\theta^\varepsilon}{\partial \theta} + 
u_\rho^\varepsilon + (\cot \varphi) u_\varphi^\varepsilon \right ) \left ( \frac{1}{\sin \varphi} \frac{\partial \psi_\theta}{\partial \theta} + \psi_\rho  \right . \nonumber \\
&\qquad {}  + (\cot \varphi) \psi_\varphi \Big ) \Bigg ] \rho^2 \sin \varphi\, d\rho d\varphi d\theta 
\nonumber \\
&\quad {} + \int_{\Omega_2^\varepsilon} \left [ \frac{\partial p^\varepsilon}{\partial \rho} \psi_\rho  + 
\frac{1}{\rho} \frac{\partial p^\varepsilon}{\partial \varphi} \psi_\varphi + 
\frac{1}{\rho \sin \varphi} \frac{\partial p^\varepsilon}{\partial \theta} \psi_\theta \right ] \rho^2 \sin \varphi\, d\rho d\varphi d\theta = 0, \nonumber \\
&\quad \forall \vec{\psi} \in \left ( H_0^1 (\Omega_2^\varepsilon ) \right )^3 \label{eq-stokes-sph-var}
\end{align}

Setting $\psi_\theta = \psi_\varphi = 0$ in \eqref{eq-stokes-sph-var}, we have 

\begin{align}
&\mu \int_{\Omega_2^\varepsilon} \Bigg [ \frac{\partial u_\rho^\varepsilon}{\partial \rho} 
\frac{\partial \psi_\rho}{\partial \rho} + \frac{1}{\rho^2} \left ( \frac{\partial u_\rho^\varepsilon}{\partial \varphi} 
- u_\varphi^\varepsilon \right ) \frac{\partial \psi_\rho}{\partial \varphi} \nonumber \\
&\quad {} + \frac{1}{\rho^2 \sin \varphi} \left ( \frac{1}{\sin \varphi} \frac{\partial u_\rho^\varepsilon}{\partial \theta} - 
u_\theta^\varepsilon \right ) \frac{\partial \psi_\rho}{\partial \theta} 
+ \frac{1}{\rho^2} \left ( \frac{\partial u_\varphi^\varepsilon}{\partial \varphi} + 
u_\rho^\varepsilon \right ) \psi_\rho \nonumber \\
&\quad {} + \frac{1}{\rho^2} \left ( \frac{1}{\sin \varphi} \frac{\partial u_\theta^\varepsilon}{\partial \theta} + 
u_\rho^\varepsilon + (\cot \varphi) u_\varphi^\varepsilon \right ) \psi_\rho \Bigg ] \rho^2 \sin \varphi\, d\rho d\varphi d\theta  \nonumber \\
&\quad {} + \int_{\Omega_2^\varepsilon} \frac{\partial p^\varepsilon}{\partial \rho} \psi_\rho  \rho^2 \sin \varphi\, d\rho d\varphi d\theta = 0, \nonumber \\
&\quad \forall \psi_\rho \in H_0^1 (\Omega_2^\varepsilon )  \label{eq-stokes-sph-var-rho}
\end{align}

If we take $\psi_\rho = \psi_\theta = 0$ in \eqref{eq-stokes-sph-var}, we obtain 
\begin{align}
&\mu \int_{\Omega_2^\varepsilon} \Bigg [  - \frac{1}{\rho^2} \left ( \frac{\partial u_\rho^\varepsilon}{\partial \varphi} 
- u_\varphi^\varepsilon \right ) \psi_\varphi + \frac{\partial u_\varphi^\varepsilon}{\partial \rho} 
\frac{\partial \psi_\varphi}{\partial \rho} + \frac{1}{\rho^2} \left ( \frac{\partial u_\varphi^\varepsilon}{\partial \varphi} + 
u_\rho^\varepsilon \right ) \frac{\partial \psi_\varphi}{\partial \varphi}  \nonumber \\
&\quad {} + \frac{1}{\rho^2\sin \varphi} \left ( \frac{1}{\sin \varphi} \frac{\partial u_\varphi^\varepsilon}{\partial \theta} - 
(\cot \varphi) u_\theta^\varepsilon \right ) \frac{\partial \psi_\varphi}{\partial \theta} \nonumber \\
&\quad {} + \frac{\cot \varphi}{\rho^2} \left ( \frac{1}{\sin \varphi} \frac{\partial u_\theta^\varepsilon}{\partial \theta} + 
u_\rho^\varepsilon + (\cot \varphi) u_\varphi^\varepsilon \right ) \psi_\varphi \Bigg ] \rho^2 \sin \varphi\, d\rho d\varphi d\theta 
\nonumber \\
&\quad {} + \int_{\Omega_2^\varepsilon} \frac{1}{\rho} \frac{\partial p^\varepsilon}{\partial \varphi} \psi_\varphi   \rho^2 \sin \varphi\, d\rho d\varphi d\theta = 0, \nonumber \\
&\quad \forall \psi_\varphi \in H_0^1 (\Omega_2^\varepsilon ) \label{eq-stokes-sph-var-varphi}
\end{align}

Choosing $\psi_\rho = \psi_\varphi = 0$ in \eqref{eq-stokes-sph-var}, it follows that 
\begin{align}
&\mu \int_{\Omega_2^\varepsilon} \Bigg [ - \frac{1}{\rho^2} \left ( \frac{1}{\sin \varphi} \frac{\partial u_\rho^\varepsilon}{\partial \theta} - 
u_\theta^\varepsilon \right ) \psi_\theta - \frac{\cot \varphi}{\rho^2} \left ( \frac{1}{\sin \varphi} \frac{\partial u_\varphi^\varepsilon}{\partial \theta} - 
(\cot \varphi) u_\theta^\varepsilon \right ) \psi_\theta  \nonumber \\
&\quad {} + \frac{\partial u_\theta^\varepsilon}{\partial \rho} 
\frac{\partial \psi_\theta}{\partial \rho} + \frac{1}{\rho^2} \frac{\partial u_\theta^\varepsilon}{\partial \varphi} 
\frac{\partial \psi_\theta}{\partial \varphi} \nonumber \\
&\quad {} + \frac{1}{\rho^2\sin \varphi} \left ( \frac{1}{\sin \varphi} \frac{\partial u_\theta^\varepsilon}{\partial \theta} + 
u_\rho^\varepsilon + (\cot \varphi) u_\varphi^\varepsilon \right ) \frac{\partial \psi_\theta}{\partial \theta}  \Bigg ] \rho^2 \sin \varphi\, d\rho d\varphi d\theta 
\nonumber \\
&\quad {} + \int_{\Omega_2^\varepsilon} \frac{1}{\rho \sin \varphi} \frac{\partial p^\varepsilon}{\partial \theta} \psi_\theta  \rho^2 \sin \varphi\, d\rho d\varphi d\theta = 0, \nonumber \\
&\quad \forall \psi_\theta \in H_0^1 (\Omega_2^\varepsilon ) \label{eq-stokes-sph-var-theta}
\end{align}

Rewriting the equations \eqref{eq-stokes-sph-var-rho}--\eqref{eq-stokes-sph-var-theta} in $\Omega$, we arrive at 
\begin{align}
&\int_{\Omega} \frac{1}{\varepsilon h} \frac{\partial p^\varepsilon}{\partial \xi_3} \psi_\rho  (R + \varepsilon \xi_3 h)^2 \sin \xi_1 \, \varepsilon h\, d\vec{\xi} = - \mu \int_{\Omega} \Bigg [ \frac{1}{\varepsilon^2 h^2} \frac{\partial u_\rho^\varepsilon}{\partial \xi_3} 
\frac{\partial \psi_\rho}{\partial \xi_3} \nonumber \\
&\quad {} + \frac{1}{(R + \varepsilon \xi_3 h)^2} \left ( \frac{\partial u_\rho^\varepsilon}{\partial \xi_1} - \frac{\xi_3}{h} 
\frac{\partial h}{\partial \xi_1} \frac{\partial u_\rho^\varepsilon}{\partial \xi_3}
- u_\varphi^\varepsilon \right ) \left ( \frac{\partial \psi_\rho}{\partial \xi_1} - \frac{\xi_3}{h} 
\frac{\partial h}{\partial \xi_1} \frac{\partial \psi_\rho}{\partial \xi_3} \right ) \nonumber \\
&\quad {} + \frac{1}{(R + \varepsilon \xi_3 h)^2 \sin \xi_1} \left ( \frac{1}{\sin \xi_1} \left ( \frac{\partial u_\rho^\varepsilon}{\partial \xi_2} - \frac{\xi_3}{h} 
\frac{\partial h}{\partial \xi_2} \frac{\partial u_\rho^\varepsilon}{\partial \xi_3} \right ) - 
u_\theta^\varepsilon \right ) \left ( \frac{\partial \psi_\rho}{\partial \xi_2} - \frac{\xi_3}{h} 
\frac{\partial h}{\partial \xi_2}\frac{\partial \psi_\rho}{\partial \xi_3} \right ) \nonumber \\
&\quad {} + \frac{1}{(R + \varepsilon \xi_3 h)^2} \left ( \frac{\partial u_\varphi^\varepsilon}{\partial \xi_1} 
- \frac{\xi_3}{h} 
\frac{\partial h}{\partial \xi_1} \frac{\partial u_\varphi^\varepsilon}{\partial \xi_3}+ 
u_\rho^\varepsilon \right ) \psi_\rho \nonumber \\
&\quad {} + \frac{1}{(R + \varepsilon \xi_3 h)^2} \left ( \frac{1}{\sin \varphi}\left (  \frac{\partial u_\theta^\varepsilon}{\partial \xi_2} - \frac{\xi_3}{h} 
\frac{\partial h}{\partial \xi_2} \frac{\partial u_\theta^\varepsilon}{\partial \xi_3} \right ) + 
u_\rho^\varepsilon \right . \nonumber \\
&\qquad {} + (\cot \varphi) u_\varphi^\varepsilon \Bigg ) \psi_\rho \Bigg ] (R + \varepsilon \xi_3 h)^2 \sin \xi_1\, \varepsilon h\, d\vec{\xi}, \nonumber \\
&\quad \forall \psi_\rho \in H_0^1 (\Omega)  \label{eq-stokes-sph-var-rho-Omega}
\end{align}

\begin{align}
&\int_{\Omega} \frac{1}{(R + \varepsilon \xi_3 h)} \left ( \frac{\partial p^\varepsilon}{\partial \xi_1} 
- \frac{\xi_3}{h} \frac{\partial h}{\partial \xi_1} \frac{\partial p^\varepsilon}{\partial \xi_3}
\right ) \psi_\varphi   (R + \varepsilon \xi_3 h)^2 \sin \xi_1\, \varepsilon h\, d\vec{\xi} \nonumber \\
&\quad = - \mu \int_{\Omega} \Bigg [ - \frac{1}{(R + \varepsilon \xi_3 h)^2} \left ( 
\frac{\partial u_\rho^\varepsilon}{\partial \xi_1} - \frac{\xi_3}{h} \frac{\partial h}{\partial \xi_1} 
\frac{\partial u_\rho^\varepsilon}{\partial \xi_3}  - u_\varphi^\varepsilon \right ) \psi_\varphi 
+ \frac{1}{\varepsilon^2 h^2} \frac{\partial u_\varphi^\varepsilon}{\partial \xi_3} 
\frac{\partial \psi_\varphi}{\partial \xi_3} \nonumber \\
&\quad {} + \frac{1}{(R + \varepsilon \xi_3 h)^2} \left ( \frac{\partial u_\varphi^\varepsilon}{\partial \xi_1} 
- \frac{\xi_3}{h} \frac{\partial h}{\partial \xi_1} \frac{\partial u_\varphi^\varepsilon}{\partial \xi_3}
+ u_\rho^\varepsilon \right ) \left ( \frac{\partial \psi_\varphi}{\partial \xi_1}
- \frac{\xi_3}{h} \frac{\partial h}{\partial \xi_1} \frac{\partial \psi_\varphi}{\partial \xi_3} \right ) \nonumber \\
&\quad {} + \frac{1}{(R + \varepsilon \xi_3 h)^2\sin \varphi} \left ( \frac{1}{\sin \varphi} \left ( \frac{\partial u_\varphi^\varepsilon}{\partial \xi_2} - \frac{\xi_3}{h} \frac{\partial h}{\partial \xi_2} 
\frac{\partial u_\varphi^\varepsilon}{\partial \xi_3} \right ) 
\right . \nonumber \\
&\qquad {} - 
(\cot \varphi) u_\theta^\varepsilon \Bigg ) \left ( \frac{\partial \psi_\varphi}{\partial \xi_2} 
- \frac{\xi_3}{h} \frac{\partial h}{\partial \xi_2} \frac{\partial \psi_\varphi}{\partial \xi_3}
\right ) \nonumber \\
&\quad {} + \frac{\cot \varphi}{(R + \varepsilon \xi_3 h)^2} \left ( \frac{1}{\sin \varphi} \left ( 
\frac{\partial u_\theta^\varepsilon}{\partial \xi_2} - \frac{\xi_3}{h} \frac{\partial h}{\partial \xi_2} 
\frac{\partial u_\theta^\varepsilon}{\partial \xi_3}
\right ) + 
u_\rho^\varepsilon \right . \nonumber \\ 
&\qquad {} + (\cot \varphi) u_\varphi^\varepsilon \Bigg ) \psi_\varphi \Bigg ] 
(R + \varepsilon \xi_3 h)^2 \sin \xi_1 \, \varepsilon h\, d\vec{\xi}
\nonumber \\
&\quad \forall \psi_\varphi \in H_0^1 (\Omega) \label{eq-stokes-sph-var-varphi-Omega}
\end{align}

\begin{align}
&\int_{\Omega} \frac{1}{(R + \varepsilon \xi_3 h) \sin \xi_1} 
\left ( \frac{\partial p^\varepsilon}{\partial \xi_2} - \frac{\xi_3}{h} \frac{\partial h}{\partial \xi_2} 
\frac{\partial p^\varepsilon}{\partial \xi_3} \right ) 
\psi_\theta  (R + \varepsilon \xi_3 h)^2 \sin \xi_1\, \varepsilon h\, d\vec{\xi} \nonumber \\
&\quad = - \mu \int_{\Omega} \Bigg [ 
- \frac{1}{(R + \varepsilon \xi_3 h)^2} \left ( \frac{1}{\sin \xi_1} 
\left ( \frac{\partial u_\rho^\varepsilon}{\partial \xi_2} - \frac{\xi_3}{h} \frac{\partial h}{\partial \xi_2} 
\frac{\partial u_\rho^\varepsilon}{\partial \xi_3} \right ) - u_\theta^\varepsilon \right ) \psi_\theta 
\nonumber \\
&\quad {} - \frac{\cot \varphi}{(R + \varepsilon \xi_3 h)^2} \left ( \frac{1}{\sin \xi_1} \left ( 
\frac{\partial u_\varphi^\varepsilon}{\partial \xi_2} - \frac{\xi_3}{h} \frac{\partial h}{\partial \xi_2} 
\frac{\partial u_\varphi^\varepsilon}{\partial \xi_3} \right ) - 
(\cot \varphi) u_\theta^\varepsilon \right ) \psi_\theta  \nonumber \\
&\quad {} + \frac{1}{\varepsilon^2 h^2} \frac{\partial u_\theta^\varepsilon}{\partial \xi_3} 
\frac{\partial \psi_\theta}{\partial \xi_3} + \frac{1}{(R + \varepsilon \xi_3 h)^2} 
\left ( \frac{\partial u_\theta^\varepsilon}{\partial \xi_1} - \frac{\xi_3}{h} \frac{\partial h}{\partial \xi_1} 
\frac{\partial u_\theta^\varepsilon}{\partial \xi_3} \right )
\left ( \frac{\partial \psi_\theta}{\partial \xi_1} - \frac{\xi_3}{h} \frac{\partial h}{\partial \xi_1} 
\frac{\partial \psi_\theta}{\partial \xi_3} \right )
\nonumber \\
&\quad {} + \frac{1}{(R + \varepsilon \xi_3 h)^2 \sin \xi_1} \left ( \frac{1}{\sin \xi_1} \left ( 
\frac{\partial u_\theta^\varepsilon}{\partial \xi_2} - \frac{\xi_3}{h} \frac{\partial h}{\partial \xi_2} 
\frac{\partial u_\theta^\varepsilon}{\partial \xi_3} \right ) + 
u_\rho^\varepsilon \right . \nonumber \\
&\qquad {}  + (\cot \varphi) u_\varphi^\varepsilon \bigg ) 
\left ( 
\frac{\partial \psi_\theta}{\partial \xi_2} - \frac{\xi_3}{h} \frac{\partial h}{\partial \xi_2} 
\frac{\partial \psi_\theta}{\partial \xi_3} \right ) \Bigg ] 
(R + \varepsilon \xi_3 h)^2 \sin \xi_1 \, \varepsilon h\, d\vec{\xi} \nonumber \\
&\quad \forall \psi_\theta \in H_0^1 (\Omega) \label{eq-stokes-sph-var-theta-Omega}
\end{align}

From \eqref{eq-stokes-sph-var-rho-Omega}, together with \eqref{eq-estimate-derivatives}--\eqref{eq-conv-1}, we deduce the existence of constants $C_1>0$ and $C_2 > 0$, 
independent of $\varepsilon$,  
such that 
\begin{align}
&\left | \int_{\Omega} \frac{\partial p^\varepsilon}{\partial \xi_3} \psi_\rho  (R + \varepsilon \xi_3 h)^2 \sin \xi_1 \, 
d\vec{\xi} \, \right | \le \frac{C_1}{\varepsilon} \left | \frac{\partial u_\rho^\varepsilon}{\partial \xi_3} \right |_0 
\left | \frac{\partial \psi_\rho}{\partial \xi_3} \right |_0 + C_2 \varepsilon \left \| \vec{u}^\varepsilon \right \|_0 
\left \| \vec{\psi} \right \|_0 \nonumber \\
&\forall  \vec{\psi} \in \left ( H_0^1(\Omega) \right )^3 \label{eq-estimate-p-1} 
\end{align}

If in \eqref{eq-estimate-p-1} we take $\displaystyle \vec{\psi} = \frac{1}{(R + \varepsilon \xi_3 h)^2 \sin \xi_1}\vec{\tilde{\psi}}$, with $\vec{\tilde{\psi}} \in \left ( H_0^1 (\Omega) \right )^3$, it follows that 
\begin{equation}
\left | \int_{\Omega} \frac{\partial p^\varepsilon}{\partial \xi_3} \tilde{\psi}_\rho  \, 
d\vec{\xi} \, \right | \le \left ( \frac{C_1}{\varepsilon} \left | \frac{\partial u_\rho^\varepsilon}{\partial \xi_3} \right |_0 
+ C_2 \varepsilon \left \| \vec{u}^\varepsilon \right \|_0 \right )
\left \| \vec{\tilde{\psi}} \right \|_0 \qquad 
\forall  \vec{\tilde{\psi}} \in \left ( H_0^1(\Omega) \right )^3 \label{eq-estimate-p-2} 
\end{equation}
which implies 
\begin{equation}
\left \| \frac{\partial p^\varepsilon}{\partial \xi_3} \right \|_{H^{-1}(\Omega)} \le 
\frac{C_1}{\varepsilon} \left | \frac{\partial u_\rho^\varepsilon}{\partial \xi_3} \right |_0 
+ C_2 \varepsilon \left \| \vec{u}^\varepsilon \right \|_0  \label{eq-estimate-p-3} 
\end{equation}
and, from \eqref{eq-estimate-derivatives}, we obtain 
\begin{equation}
\left \| \frac{\partial p^\varepsilon}{\partial \xi_3} \right \|_{H^{-1}(\Omega)} \le 
\frac{C}{\varepsilon} \label{eq-estimate-p-4}
\end{equation}

In a similar way, we infer from  \eqref{eq-stokes-sph-var-varphi-Omega} that 
there exist constants $C_1>0$, $C_2>0$ and $C_3>0$ independent of $\varepsilon$, such that 
\begin{align}
&\left | \int_{\Omega} \frac{\partial p^\varepsilon}{\partial \xi_1} 
\psi_\varphi  \, h\, (R + \varepsilon \xi_3 h) \sin \xi_1\, d\vec{\xi} \, \right | \nonumber \\
&\quad \le \left | \int_{\Omega}  \xi_3 \frac{\partial h}{\partial \xi_1} \frac{\partial p^\varepsilon}{\partial \xi_3}
\psi_\varphi   \, (R + \varepsilon \xi_3 h) \sin \xi_1\,  d\vec{\xi} \, \right | \nonumber \\
&\quad {} + \frac{C_1}{\varepsilon^2} \left | \frac{\partial u_\varphi^\varepsilon}{\partial \xi_3} \right |_0 
\left | \frac{\partial \psi_\varphi}{\partial \xi_3} \right |_0 + C_2 \left \| \vec{u}^\varepsilon \right \|_0 
\left \| \vec{\psi} \right \|_0 \nonumber \\
&\quad \le C_3 \left \| \frac{\partial p^\varepsilon}{\partial \xi_3} \right \|_{H^{-1}(\Omega)} 
\left \| \vec{\psi} \right \|_0
+ \frac{C_1}{\varepsilon^2} \left | \frac{\partial u_\varphi^\varepsilon}{\partial \xi_3} \right |_0 
\left | \frac{\partial \psi_\varphi}{\partial \xi_3} \right |_0 + C_2 \left \| \vec{u}^\varepsilon \right \|_0 
\left \| \vec{\psi} \right \|_0 \nonumber \\
&\forall  \vec{\psi} \in \left ( H_0^1(\Omega) \right )^3 \label{eq-estimate-p-5} 
\end{align}

Taking $\displaystyle \vec{\psi} = \frac{1}{h\, (R + \varepsilon \xi_3 h) \sin \xi_1}\vec{\tilde{\psi}}$, with $\vec{\tilde{\psi}} \in \left ( H_0^1 (\Omega) \right )^3$, in \eqref{eq-estimate-p-5}, we derive, as done previously, 
\begin{align}
&\left | \int_{\Omega} \frac{\partial p^\varepsilon}{\partial \xi_1} 
\tilde{\psi}_\varphi  \, d\vec{\xi} \, \right | 
\le \left ( C_3 \left \| \frac{\partial p^\varepsilon}{\partial \xi_3} \right \|_{H^{-1}(\Omega)} 
+ \frac{C_1}{\varepsilon^2} \left | \frac{\partial u_\varphi^\varepsilon}{\partial \xi_3} \right |_0 
 + C_2 \left \| \vec{u}^\varepsilon \right \|_0 \right ) 
\left \| \vec{\tilde{\psi}} \right \|_0 \nonumber \\
&\forall  \vec{\tilde{\psi}} \in \left ( H_0^1(\Omega) \right )^3 \label{eq-estimate-p-6} 
\end{align}
so we deduce that 
\begin{equation}
\left \| \frac{\partial p^\varepsilon}{\partial \xi_1} \right \|_{H^{-1}(\Omega)} 
\le C_3 \left \| \frac{\partial p^\varepsilon}{\partial \xi_3} \right \|_{H^{-1}(\Omega)} 
+ \frac{C_1}{\varepsilon^2} \left | \frac{\partial u_\varphi^\varepsilon}{\partial \xi_3} \right |_0 
 + C_2 \left \| \vec{u}^\varepsilon \right \|_0 \label{eq-estimate-p-7} 
\end{equation}
and from \eqref{eq-estimate-derivatives} and \eqref{eq-estimate-p-4}
\begin{equation}
\left \| \frac{\partial p^\varepsilon}{\partial \xi_1} \right \|_{H^{-1}(\Omega)}  \le 
\frac{C}{\varepsilon^2} \label{eq-estimate-p-8} 
\end{equation}

Repeating the same steps, starting from \eqref{eq-stokes-sph-var-theta-Omega}, we obtain that there exist constants independent of $\varepsilon$ such that
\begin{align}
&\left \| \frac{\partial p^\varepsilon}{\partial \xi_2} \right \|_{H^{-1}(\Omega)} 
\le C_1 \left \| \frac{\partial p^\varepsilon}{\partial \xi_3} \right \|_{H^{-1}(\Omega)} 
+ \frac{C_2}{\varepsilon^2} \left | \frac{\partial u_\theta^\varepsilon}{\partial \xi_3} \right |_0 
 + C_3 \left \| \vec{u}^\varepsilon \right \|_0 \label{eq-estimate-p-9} \\
&\left \| \frac{\partial p^\varepsilon}{\partial \xi_2} \right \|_{H^{-1}(\Omega)}  \le 
\frac{C}{\varepsilon^2} \label{eq-estimate-p-10} 
\end{align}

Finally, the existence of a subsequence of $p^\varepsilon$ satisfying \eqref{eq-conv-p-2}--\eqref{eq-conv-p-4} is a consequence of estimates \eqref{eq-estimate-p-4}, \eqref{eq-estimate-p-8} and \eqref{eq-estimate-p-10}. Moreover, \eqref{eq-conv-p-1} is deduced from \eqref{eq-conv-p-2}--\eqref{eq-conv-p-4} by applying the Ne\v{c}as inequality (see, for example, \cite{Ciarlet2015}).

To complete the proof, assuming that $p^\varepsilon \in L_0^2(\Omega^\varepsilon)$, we have that 
\begin{equation}
\int_{\Omega^\varepsilon} p^\varepsilon\, d\vec{x} = 0
\end{equation}

Then, making a change of variables to $\Omega$, we get
\begin{equation}
\int_\Omega p^\varepsilon (R+\varepsilon \xi_3 h)^2 \sin \xi_1 \, \varepsilon h\, d\vec{\xi} = 0 
\end{equation}

Multiplying by $\varepsilon$ and passing to the limit, we arrive at 
\begin{equation}
\int_\Omega p^* h \sin \xi_1\, d\vec{\xi} = 0 \label{eq-limit-int-p}
\end{equation}
and, since the integrand in \eqref{eq-limit-int-p} does not depend on $\xi_3$, we conclude \eqref{eq-conv-int-p}.

\end{proof}

\section{Determining  the limits $\vec{u}^*$ and $p^*$} \label{section-step-7}

\begin{theorem}
The components of the limit velocity field $\vec{u}^*$ fulfill: 
\begin{align}
u^*_\varphi &= \frac{h^2}{2 \mu R} \frac{\partial p^*}{\partial \xi_1} \xi_3 ( \xi_3 - 1) \label{eq-det-u-5} \\
u^*_\theta &= \frac{h^2}{2 \mu R \sin \xi_1} \frac{\partial p^*}{\partial \xi_2} \xi_3 ( \xi_3 - 1) 
+ R \omega \sin \xi_1 ( 1 - \xi_3) \label{eq-det-u-6} \\
u^*_\rho &= 0 \label{eq-det-u-6-b}
\end{align}
\end{theorem}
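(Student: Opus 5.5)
We pass to the limit $\varepsilon\to0$ in the three rescaled variational equations \eqref{eq-stokes-sph-var-rho-Omega}, \eqref{eq-stokes-sph-var-varphi-Omega}, \eqref{eq-stokes-sph-var-theta-Omega}, using the convergences of the previous two sections. This gives ordinary differential equations in $\xi_3$ for the components of $\vec u^*$, which we then integrate using the boundary values \eqref{eq-th-conv-5}--\eqref{eq-th-conv-6}.

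\textbf{The $\varphi$ and $\theta$ equations.} Multiply \eqref{eq-stokes-sph-var-varphi-Omega} by $\varepsilon$ and fix $\psi_\varphi\in H_0^1(\Omega)$ independent of $\varepsilon$. The terms behave as follows:
\begin{itemize}
\item the viscous term $\frac{1}{\varepsilon^2h^2}\partial_{\xi_3}u_\varphi^\varepsilon\,\partial_{\xi_3}\psi_\varphi$ carries the weight $\varepsilon h(R+\varepsilon\xi_3 h)^2\sin\xi_1$, so after the extra factor $\varepsilon$ it becomes $\frac{R^2\sin\xi_1}{h}\partial_{\xi_3}u_\varphi^\varepsilon\,\partial_{\xi_3}\psi_\varphi$ up to $O(\varepsilon)$ corrections, and this converges weakly by \eqref{eq-th-conv-2};
\item all other viscous terms are $O(\varepsilon^2)$ times quantities bounded by \eqref{eq-estimate-derivatives} and \eqref{eq-th-conv-4}, with $\varepsilon\,\partial_{\xi_i}u^\varepsilon$ bounded, so they vanish;
\item the pressure side is $\varepsilon^2\int (\partial_{\xi_1}p^\varepsilon - \tfrac{\xi_3}{h}\partial_{\xi_1}h\,\partial_{\xi_3}p^\varepsilon)\psi_\varphi\,(R+\varepsilon\xi_3h)h\sin\xi_1$, which tends to $\langle\partial_{\xi_1}p^*,Rh\sin\xi_1\psi_\varphi\rangle$ by \eqref{eq-conv-p-2}--\eqref{eq-conv-p-4}. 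To justify this, move the smooth $\varepsilon$-dependent weights onto the test function; they converge in $W^{1,\infty}$, so the $H^{-1}$ weak convergence suffices.
\end{itemize}
In the limit we get, in the sense of distributions,
\[
-\mu\frac{R^2}{h}\frac{\partial^2u^*_\varphi}{\partial\xi_3^2}+Rh\frac{\partial p^*}{\partial\xi_1}=0 ,
\]
that is, $\mu\,\partial^2_{\xi_3}u^*_\varphi = \frac{h^2}{R}\partial_{\xi_1}p^*$. The $\theta$ equation gives, in the same way, $\mu\,\partial^2_{\xi_3}u^*_\theta=\frac{h^2}{R\sin\xi_1}\partial_{\xi_2}p^*$.

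\textbf{The $\rho$ equation.} Multiply \eqref{eq-stokes-sph-var-rho-Omega} by $\varepsilon$ as well. The pressure side tends to $\langle\partial_{\xi_3}p^*,\cdot\rangle=0$. The viscous term is $\frac{1}{\varepsilon^2h^2}\cdot\varepsilon h$ times the rest, hence $O(1)$ times $\partial_{\xi_3}u_\rho^\varepsilon$, so it yields no constraint on $p^*$ beyond $p^*=p^*(\xi_1,\xi_2)$. The vanishing of $u^*_\rho$ should instead come from the divergence equation. The natural route is the flux form \eqref{eq-divergence-omega}, where $J_3^\varepsilon$ contains $\frac1\varepsilon(R+\varepsilon\xi_3h)^2u^\varepsilon_\rho\sin\xi_1$. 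Multiplying \eqref{eq-divergence-omega} by $\varepsilon$ gives
\[
\varepsilon\,\partial_{\xi_1}J_1^\varepsilon+\varepsilon\,\partial_{\xi_2}J_2^\varepsilon+\partial_{\xi_3}\bigl(R^2u_\rho^\varepsilon\sin\xi_1+O(\varepsilon)\bigr)=0 .
\]
The first two terms tend to $0$ in distributions, because $J_1^\varepsilon,J_2^\varepsilon$ are bounded in $L^2$. Hence $\partial_{\xi_3}u^*_\rho=0$. Since $u^*_\rho=0$ on $\xi_3=1$, this gives $u^*_\rho\equiv0$, which is \eqref{eq-det-u-6-b}.

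\textbf{Integration in $\xi_3$.} Because $p^*$ does not depend on $\xi_3$, we integrate the ODEs twice in $\xi_3$. The boundary data \eqref{eq-th-conv-5}--\eqref{eq-th-conv-6} give $u^*_\varphi=0$ at $\xi_3=0,1$, and $u^*_\theta=R\omega\sin\xi_1$ at $\xi_3=0$, $u^*_\theta=0$ at $\xi_3=1$. The first condition produces the parabolic profile $\frac{h^2}{2\mu R}\partial_{\xi_1}p^*\,\xi_3(\xi_3-1)$ in \eqref{eq-det-u-5}. The second produces the same parabola for $u^*_\theta$ plus the Couette term $R\omega\sin\xi_1(1-\xi_3)$, as in \eqref{eq-det-u-6}.

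\textbf{Main obstacle.} The main difficulty is regularity. We only know $\partial_{\xi_i}p^*\in H^{-1}(\Omega)$, so the ODE must be read as an identity $\partial^2_{\xi_3}u^*=F(\xi_1,\xi_2)$ in distributions with $F$ independent of $\xi_3$. It must then be integrated against tensorized test functions $\chi(\xi_1,\xi_2)\eta(\xi_3)$ to obtain the formulas. If needed, we would first use the formula to show $\partial_{\xi_1}p^*\in L^2$: integrate against $\eta$ with $\int\eta\,\xi_3(\xi_3-1)\neq0$ and use $u^*\in V^*$. A second point requiring care is checking that the $\cot\varphi$ and $u_\rho^\varepsilon/\rho^2$ zero-order terms vanish after the factor $\varepsilon^2$; Remark~\ref{remark-bound-sin} keeps them bounded.
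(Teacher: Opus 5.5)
Your proposal is correct and follows essentially the paper's route: you multiply \eqref{eq-stokes-sph-var-varphi-Omega} and \eqref{eq-stokes-sph-var-theta-Omega} by $\varepsilon$, pass to the limit to get $\mu\,\partial^2_{\xi_3}u^*=\ldots$, integrate with \eqref{eq-th-conv-5}--\eqref{eq-th-conv-6}, and obtain $u^*_\rho=0$ from the continuity equation multiplied by $\varepsilon$. Your use of the flux form \eqref{eq-divergence-omega} instead of \eqref{eq-continuity-omega-2} is an equivalent variant that needs only $L^2$ bounds on $\vec u^\varepsilon$. Your tensorized-test-function argument also fills in the integration-in-$\xi_3$ step that the paper states without comment, and it gives $\partial_{\xi_i}p^*\in L^2(D)$, i.e.\ $p^*\in H^1(D)$, as a bonus. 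The only slip is in your unused aside on the $\rho$-equation: after multiplying by $\varepsilon$ the pressure term is $\varepsilon\,\partial_{\xi_3}p^\varepsilon$, which \eqref{eq-estimate-p-4} only bounds; it is $\varepsilon^2\partial_{\xi_3}p^\varepsilon$ that tends to $\partial_{\xi_3}p^*=0$. Nothing in your proof depends on that aside.
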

\begin{proof}
Let us multiply equation \eqref{eq-stokes-sph-var-varphi-Omega} by $\varepsilon$ and let $\varepsilon \rightarrow 0$. Taking into account \eqref{eq-th-conv-1}--\eqref{eq-th-conv-4} and \eqref{eq-conv-p-1}--\eqref{eq-conv-p-4} we derive 
\begin{align}
&\int_\Omega R \sin \xi_1 h \frac{\partial p^*}{\partial \xi_1} \psi_\varphi \, d\vec{\xi} 
= - \mu \int_\Omega \frac{R^2 \sin \xi_1}{h} \frac{\partial u^*_\varphi}{\partial \xi_3} 
\frac{\partial \psi_\varphi}{\partial \xi_3} \, d\vec{\xi}, \nonumber \\
&\forall \psi_\varphi \in H_0^1(\Omega) \label{eq-det-u-1}
\end{align}

Since $\psi_\varphi \in H_0^1(\Omega)$, it follows from \eqref{eq-det-u-1} that 
\begin{align}
&\int_\Omega \left ( R \sin \xi_1 h \frac{\partial p^*}{\partial \xi_1} \right ) \psi_\varphi \, d\vec{\xi} 
= \mu \int_\Omega \frac{\partial}{\partial \xi_3} \left ( \frac{R^2 \sin \xi_1}{h} \frac{\partial u^*_\varphi}{\partial \xi_3} 
\right ) \psi_\varphi \, d\vec{\xi}, \nonumber \\
&\forall \psi_\varphi \in H_0^1(\Omega) \label{eq-det-u-2}
\end{align}
thus, 
\begin{equation}
R \sin \xi_1 h \frac{\partial p^*}{\partial \xi_1} = \mu\, \frac{\partial}{\partial \xi_3} \left ( \frac{R^2 \sin \xi_1}{h} \frac{\partial u^*_\varphi}{\partial \xi_3} \right ) \label{eq-det-u-3}
\end{equation}
and then 
\begin{equation}
\frac{\partial^2 u^*_\varphi}{\partial \xi_3^2} = \frac{h^2}{\mu R} \frac{\partial p^*}{\partial \xi_1} 
\quad \textrm{in} \  H^{-1}(\Omega) \label{eq-det-u-4}
\end{equation}

From \eqref{eq-det-u-4} and the boundary conditions \eqref{eq-th-conv-5}--\eqref{eq-th-conv-6}, we deduce 
\eqref{eq-det-u-5}. Repeating the same steps starting from the equation \eqref{eq-stokes-sph-var-theta-Omega}, we arrive at 
\eqref{eq-det-u-6}.

Writing equation \eqref{eq-Stokes-2} in $\Omega$, we obtain 
an equation like \eqref{eq-continuity-omega}, with $\vec{u}^\varepsilon$ instead of $\vec{v}^\varepsilon$. Now multiplying this equation by $\psi \in L^2(\Omega)$ and integrating over $\Omega$, we get
\begin{align}
&\int_\Omega \Biggl [ \frac{1}{\varepsilon h (R+\varepsilon \xi_3 h)^2}\frac{\partial}{\partial \xi_3} \left [ (R+\varepsilon \xi_3 h)^2 u_{\rho}^\varepsilon \right ] \nonumber \\
&\quad {} + \frac{1}{(R+\varepsilon \xi_3 h) \sin \xi_1}\left [ \frac{\partial}{\partial \xi_1} \left ( u_{\varphi}^\varepsilon \sin \xi_1 \right ) - \frac{\xi_3}{h}\frac{\partial h}{\partial \xi_1} \frac{\partial}{\partial \xi_3} \left ( u_{\varphi}^\varepsilon \sin \xi_1 \right ) \right ] \nonumber \\
&\quad {} + 
\frac{1}{(R+\varepsilon \xi_3 h) \sin \xi_1}\left [ \frac{\partial u_{\theta}^\varepsilon}{\partial \xi_2} 
- \frac{\xi_3}{h}\frac{\partial h}{\partial \xi_2} \frac{\partial u_{\theta}^\varepsilon}{\partial \xi_3} \right ] 
\Biggr ] \psi \, d\vec{\xi} 
= 0, \nonumber \\
&\qquad \forall \psi \in L^2(\Omega) \label{eq-continuity-omega-2}
\end{align}

After multiplying \eqref{eq-continuity-omega-2} by $\varepsilon$, letting $\varepsilon \rightarrow 0$, 
and using \eqref{eq-th-conv-1}--\eqref{eq-th-conv-4}, it follows 
\begin{equation}
\int_\Omega \frac{1}{h} \frac{\partial u^*_\rho}{\partial \xi_3} \psi \, d\vec{\xi} = 0, \qquad 
\forall \psi \in L^2(\Omega) \label{eq-limit-u-rho} 
\end{equation}

From \eqref{eq-limit-u-rho} and boundary condition \eqref{eq-th-conv-6}, \eqref{eq-det-u-6-b} is yielded.

\end{proof}

\begin{remark}
From \eqref{eq-estimate-p-4} and \eqref{eq-stokes-sph-var-rho-Omega}, we can deduce, in a similar way as above, the existence of a subsequence of $p^\varepsilon$ such that 
\begin{equation}
\varepsilon \frac{\partial p^\varepsilon}{\partial \xi_3} \rightharpoonup \frac{\mu}{h} \frac{\partial^2 u^*_\rho}{\partial \xi_3^2} 
\textrm{ in } H^{-1}(\Omega)-\textrm{weak} \label{eq-det-u-7} 
\end{equation}
and, from \eqref{eq-det-u-6-b}, we derive that 
\begin{equation}
\varepsilon \frac{\partial p^\varepsilon}{\partial \xi_3} \rightharpoonup 0 
\textrm{ in } H^{-1}(\Omega)-\textrm{weak} \label{eq-det-u-7-b} 
\end{equation}
\end{remark}

\begin{theorem}
Let us consider the spaces 
\begin{align}
H^1_{\#}(D_\varphi) &= \left \{ \psi \in H^1(D_\varphi) / \psi \textrm{ is } \xi_2\textrm{-periodic} \right \} \label{eq-det-p-14-b-bis} \\ 
H^1_{\#}(D) &= \left \{ \psi \in H^1(D) / \psi \textrm { is } \xi_2\textrm{-periodic} \right \} \label{eq-det-p-14-b}
\end{align}
and let us suppose that 
\begin{equation}
k_i^\varepsilon \rightarrow k_i^0 \quad \textrm{ in } H^1_{\#}(D_\varphi) \quad (i=0,1) \label{eq-conv-k-0-1}
\end{equation}
as $\varepsilon \rightarrow 0$. Then $p^* \in H^1_{\#}(D)$ is the unique solution of the Neumann problem 
\begin{align}
&p^* \in H^1_{\#}(D), \quad \int_D p^*\, h\, \sin \xi_1\, d\xi_1d\xi_2 = 0, \nonumber \\
&\int_{D} 
\left [ \frac{h^3 \sin \xi_1}{12\mu} \frac{\partial p^*}{\partial \xi_1}  \frac{\partial \psi}{\partial \xi_1} + 
\frac{h^3}{12\mu \sin \xi_1} \frac{\partial p^*}{\partial \xi_2}  \frac{\partial \psi}{\partial \xi_2} 
 \right ]  d\xi_1 d\xi_2 \nonumber \\
 &\quad = \int_D \frac{R^2 h \omega \sin \xi_1}{2} \frac{\partial \psi}{\partial \xi_2}  d\xi_1 d\xi_2 \nonumber \\
&{} + \int_0^{2\pi} R \left [ 
h(\varphi_0,\xi_2) 
\left ( \int_0^1 k_0^0(\xi_2, \xi_3)\, d\xi_3 \right ) \psi(\varphi_0, \xi_2) \sin \varphi_0 \right . \nonumber \\
&\quad {} - \left . 
h(\varphi_1,\xi_2) \left ( \int_0^1 k_1^0(\xi_2, \xi_3)\, d\xi_3 \right ) \psi(\varphi_1, \xi_2) \sin \varphi_1 
\right ] \, d\xi_2   \nonumber \\ 
&\quad \forall \psi \in H^1_{\#}(D)
\label{eq-det-p-16}
\end{align}

\end{theorem}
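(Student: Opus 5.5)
The plan is to pass to the limit in the weak form of the divergence-free condition, tested against functions independent of $\xi_3$, and then insert the explicit velocity profiles \eqref{eq-det-u-5}--\eqref{eq-det-u-6}. Fix $\psi \in H^1_{\#}(D)$, smooth first and then by density, and extend it to $\Omega$ constant in $\xi_3$. By \eqref{eq-divergence-omega}, with $\vec{u}^\varepsilon$ in place of $\vec{v}^\varepsilon$, the field $\vec{J}^\varepsilon$ of \eqref{eq-J-1}--\eqref{eq-J-3} built from $\vec{u}^\varepsilon$ is divergence free in $\Omega$. Multiplying by $\psi$ and integrating by parts, the $\xi_3$-flux disappears because $\partial\psi/\partial\xi_3=0$. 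We are left with
\[
\int_\Omega \Big( J_1^\varepsilon \frac{\partial \psi}{\partial \xi_1} + J_2^\varepsilon \frac{\partial \psi}{\partial \xi_2}\Big) d\vec{\xi} = \int_{\partial\Omega} \vec{G}^\varepsilon\cdot\vec{n}\,\psi\, dA ,
\]
where $\vec{J}^\varepsilon\cdot\vec n=\vec G^\varepsilon\cdot\vec n$ on $\partial\Omega$. The lateral boundary in $\xi_2$ cancels by periodicity. The faces $\xi_3=0,1$ do contribute, and this is subtle: on $\xi_3=0$, $J_3^\varepsilon = R^2 u_\rho^\varepsilon\sin\xi_1/\varepsilon=0$ since $g_\rho=0$, and on $\xi_3=1$ everything vanishes, so in fact these faces contribute $0$. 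The faces $\xi_1=\varphi_0,\varphi_1$ contribute $\mp\int (R+\varepsilon\xi_3 h) h k_i^\varepsilon \sin\varphi_i\,\psi\,d\xi_2 d\xi_3$. By \eqref{eq-conv-k-0-1} these converge to the boundary terms of \eqref{eq-det-p-16}. I will need to double-check signs against the outward normal.

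Next I pass to the limit on the left-hand side. Since $J_1^\varepsilon = h(R+\varepsilon\xi_3 h)u^\varepsilon_\varphi\sin\xi_1$ and $u^\varepsilon\rightharpoonup u^*$ weakly in $L^2(\Omega)$ by \eqref{eq-th-conv-1}, we get $J_1^\varepsilon\rightharpoonup hR u^*_\varphi\sin\xi_1$, and similarly $J_2^\varepsilon \rightharpoonup hRu^*_\theta$. Because $\psi$ does not depend on $\xi_3$, only the averages $\int_0^1 u^*\,d\xi_3$ matter. From \eqref{eq-det-u-5}--\eqref{eq-det-u-6}, using $\int_0^1\xi_3(\xi_3-1)d\xi_3=-1/6$ and $\int_0^1(1-\xi_3)d\xi_3=1/2$:
\[
R h\sin\xi_1\int_0^1 u^*_\varphi = -\frac{h^3\sin\xi_1}{12\mu}\frac{\partial p^*}{\partial\xi_1},\qquad
Rh\int_0^1 u^*_\theta = -\frac{h^3}{12\mu\sin\xi_1}\frac{\partial p^*}{\partial\xi_2}+\frac{R^2h\omega\sin\xi_1}{2}.
\]
Substituting and rearranging gives exactly \eqref{eq-det-p-16}. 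The normalization $\int_D p^* h\sin\xi_1=0$ is \eqref{eq-conv-int-p}.

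It remains to prove regularity and uniqueness. From \eqref{eq-det-u-5}, $\partial p^*/\partial\xi_1 = \frac{2\mu R}{h^2}\partial^2_{\xi_3} u^*_\varphi\,/\,(\ldots)$, and more simply $\partial p^*/\partial \xi_1 = -\frac{12\mu R}{h^2}\int_0^1 u^*_\varphi\,d\xi_3 \in L^2(D)$ (similarly for $\xi_2$). Hence $p^*\in H^1(D)$. Periodicity is inherited from the periodic setting of the $\varepsilon$-problem, since $p^\varepsilon$ is periodic in $\xi_2$ on the closed annulus. Uniqueness comes from Lax–Milgram on the subspace of $H^1_\#(D)$ with $\int_D \psi h\sin\xi_1=0$. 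The bilinear form is coercive there because $h\ge h_0$, $\sin\xi_1\ge c_0$ (Remark~\ref{remark-bound-sin}), and a Poincaré–Wirtinger inequality with this weighted mean holds. The right side is a continuous functional by the trace theorem. Compatibility (taking $\psi=1$) reduces to $\lim$ of \eqref{eq-g-7} divided by $\varepsilon$, which is zero. Uniqueness also implies that the whole sequence converges, not just a subsequence.

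The main obstacle I expect is justifying $p^*\in H^1$ and the identification of $\partial p^*/\partial\xi_i$ as functions. Equation \eqref{eq-det-u-4} only holds in $H^{-1}(\Omega)$, with $p^*$ independent of $\xi_3$ (by \eqref{eq-conv-p-4}). I would integrate the relation $\partial_{\xi_3}(\partial_{\xi_3}u^*_\varphi)=\frac{h^2}{\mu R}\partial_{\xi_1}p^*$ against test functions of the form $\chi(\xi_1,\xi_2)\,\xi_3(1-\xi_3)$ to express $\langle\partial_{\xi_1}p^*,\chi\rangle$ through $\int u^*_\varphi\chi$, which is bounded by $|\chi|_{L^2}$. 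A second delicate point is the trace of $u^*$ at $\xi_3=0$, which should follow from Remark~\ref{Remark-ineq-Poicare-V-*}.
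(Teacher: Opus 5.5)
Your proposal is correct and follows the paper's proof: you test the divergence-free condition with $\xi_3$-independent $\psi\in H^1_{\#}(D)$, express the flux through $\xi_1=\varphi_0,\varphi_1$ via $k_0^\varepsilon,k_1^\varepsilon$, pass to the limit after dividing by $\varepsilon$, and insert the $\xi_3$-averages of \eqref{eq-det-u-5}--\eqref{eq-det-u-6}. The only cosmetic difference is that you integrate by parts directly in $\Omega$ with the flux $\vec{J}^\varepsilon$ of \eqref{eq-J-1}--\eqref{eq-J-3}, whereas the paper does it in $\Omega^\varepsilon$ and then changes variables (the two identities differ by the factor $\varepsilon$); your additional step, testing \eqref{eq-det-u-4} with $\chi(\xi_1,\xi_2)\,\xi_3(1-\xi_3)$ to get $\partial p^*/\partial \xi_1=-\frac{12\mu R}{h^2}\int_0^1 u^*_\varphi\,d\xi_3\in L^2(D)$ (and its analogue for $\xi_2$), usefully makes explicit the $H^1(D)$ regularity of $p^*$ that the paper leaves implicit.
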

\begin{proof}

From \eqref{eq-Stokes-2} we deduce that 
\begin{equation}
\int_{\Omega^\varepsilon} \left ( \diver \vec{u}^\varepsilon \right ) \psi\, d\vec{x} = 0 \quad 
\forall \psi \in H^1(\Omega^\varepsilon) \label{eq-det-p-1}
\end{equation}
hence 
\begin{equation}
\int_{\Omega^\varepsilon} \vec{u}^\varepsilon \cdot \nabla \psi \, d\vec{x} =  
\int_{\partial \Omega^\varepsilon} \left ( \vec{u}^\varepsilon \cdot \vec{n} \right ) \psi \, dA
 \quad 
\forall \psi \in H^1(\Omega^\varepsilon) \label{eq-det-p-2}
\end{equation}

After a change of variable to the $(\varphi, \theta, \rho)$ system of coordinates, we identify a function $\psi \in H^1_{\#}(D)$ with a function $\psi \in H^1(\Omega_2^\varepsilon)$, independent of $\rho$, and periodic in $\theta$. Then, the left hand side of equation \eqref{eq-det-p-2} can be written as 
\begin{align}
&\int_{\Omega^\varepsilon} \vec{u}^\varepsilon \cdot \nabla \psi \, d\vec{x} \nonumber \\ 
&=\int_{\Omega_2^\varepsilon} 
\left [ \frac{u_\varphi^\varepsilon}{\rho} \frac{\partial \psi}{\partial \varphi} + 
\frac{u_\theta^\varepsilon}{\rho \sin \varphi} \frac{\partial \psi}{\partial \theta} \right ] \rho^2 \sin \varphi \, d\rho d\varphi d\theta \nonumber \\
&= \int_{\Omega_2^\varepsilon} 
\left [ \left ( u_\varphi^\varepsilon \rho \sin \varphi \right )  \frac{\partial \psi}{\partial \varphi} + 
\left ( u_\theta^\varepsilon \rho \right ) \frac{\partial \psi}{\partial \theta} \right ] d\rho d\varphi d\theta \nonumber \\
&\forall \psi \in H^1_{\#}(D) 
\label{eq-det-p-5}
\end{align}
and changing to the variable $\vec{\xi}$ 
\begin{align}
&\int_{\Omega^\varepsilon} \vec{u}^\varepsilon \cdot \nabla \psi \, d\vec{x} \nonumber \\ 
&=  \int_{\Omega} 
\left [ \left ( u_\varphi^\varepsilon (R + \varepsilon \xi_3 h ) \sin \xi_1 \right )  \frac{\partial \psi}{\partial \xi_1} + 
\left ( u_\theta^\varepsilon (R + \varepsilon \xi_3 h ) \right ) \frac{\partial \psi}{\partial \xi_2} 
 \right ] \varepsilon h \, d\vec{\xi} \nonumber \\
&\forall \psi \in H^1_{\#}(D)
\label{eq-det-p-7}
\end{align}

Due to the boundary conditions \eqref{eq-Stokes-3} and \eqref{eq-g-1}--\eqref{eq-g-5-d}, taking  $\psi \in H^1_{\#}(D)$, 
and performing a change of variables to $\vec{\xi}$, the right hand side of equation \eqref{eq-det-p-2} can be rewritten as 
\begin{align}
&\int_{\partial \Omega^\varepsilon} \left ( \vec{u}^\varepsilon \cdot \vec{n} \right ) \psi \, dA \nonumber \\
&= \varepsilon \iint_{D_\varphi} \left [ h(\varphi_1,\xi_2) \left ( R + \varepsilon \xi_3 h(\varphi_1,\xi_2) \right ) 
k_1^\varepsilon(\xi_2, \xi_3) \psi(\varphi_1, \xi_2, \xi_3) \sin \varphi_1 \right . \nonumber \\
&\quad{} \left . {} - 
h(\varphi_0,\xi_2) \left ( R + \varepsilon \xi_3 h(\varphi_0,\xi_2) \right ) 
k_0^\varepsilon(\xi_2, \xi_3) \psi(\varphi_0, \xi_2, \xi_3) \sin \varphi_0 \right ] \, d\xi_2 d\xi_3  \label{eq-det-p-7-b} 
\end{align}

Now, from \eqref{eq-det-p-2}, \eqref{eq-det-p-7} and \eqref{eq-det-p-7-b}, it follows 
\begin{align}
&\int_{\Omega} 
\left [ \left ( u_\varphi^\varepsilon (R + \varepsilon \xi_3 h ) \sin \xi_1 \right )  \frac{\partial \psi}{\partial \xi_1} + 
\left ( u_\theta^\varepsilon (R + \varepsilon \xi_3 h ) \right ) \frac{\partial \psi}{\partial \xi_2} 
 \right ] \varepsilon h \, d\vec{\xi} \nonumber \\
&\quad = \varepsilon \iint_{D_\varphi} \left [ h(\varphi_1,\xi_2) \left ( R + \varepsilon \xi_3 h(\varphi_1,\xi_2) \right ) 
k_1^\varepsilon(\xi_2, \xi_3) \psi(\varphi_1, \xi_2) \sin \varphi_1 \right . \nonumber \\
&\quad{} \left . {} - 
h(\varphi_0,\xi_2) \left ( R + \varepsilon \xi_3 h(\varphi_0,\xi_2) \right ) 
k_0^\varepsilon(\xi_2, \xi_3) \psi(\varphi_0, \xi_2) \sin \varphi_0 \right ] \, d\xi_2 d\xi_3  \nonumber \\
&\forall \psi \in H^1_{\#}(D) 
\label{eq-det-p-7-c}
\end{align}

If we now divide \eqref{eq-det-p-7-c} by $\varepsilon$ and let $\varepsilon \rightarrow 0$, taking into account \eqref{eq-conv-k-0-1}, we obtain
\begin{align}
&\int_{\Omega} 
\left [ \left ( u_\varphi^* R h \sin \xi_1 \right )  \frac{\partial \psi}{\partial \xi_1} + 
\left ( u_\theta^* R h \right ) \frac{\partial \psi}{\partial \xi_2} 
 \right ]  d\vec{\xi} \nonumber \\
&\quad = \iint_{D_\varphi} R \left [ h(\varphi_1,\xi_2)  
k_1^0(\xi_2, \xi_3) \psi(\varphi_1, \xi_2) \sin \varphi_1 \right . \nonumber \\
&\quad{} \left . {} - 
h(\varphi_0,\xi_2) 
k_0^0(\xi_2, \xi_3) \psi(\varphi_0, \xi_2) \sin \varphi_0 \right ] \, d\xi_2 d\xi_3  \nonumber \\
&\forall \psi \in H^1_{\#}(D)
\label{eq-det-p-8}
\end{align}
and then
\begin{align}
&\int_{D} 
\left [ \left ( \left ( \int_0^1 u_\varphi^*\, d\xi_3 \right ) R h \sin \xi_1 \right )  \frac{\partial \psi}{\partial \xi_1} + 
\left (  \left ( \int_0^1 u_\theta^*\, d\xi_3 \right ) R h \right ) \frac{\partial \psi}{\partial \xi_2} 
 \right ]  d\xi_1 d\xi_2 \nonumber \\
&\quad = \int_0^{2\pi} R \left [ 
h(\varphi_1,\xi_2) \left ( \int_0^1 k_1^0(\xi_2, \xi_3)\, d\xi_3 \right ) \psi(\varphi_1, \xi_2) \sin \varphi_1 \right . 
\nonumber \\
&\quad {} - \left . h(\varphi_0,\xi_2) 
\left ( \int_0^1 k_0^0(\xi_2, \xi_3)\, d\xi_3 \right ) \psi(\varphi_0, \xi_2) \sin \varphi_0
\right ] \, d\xi_2 \nonumber \\ 
&\forall \psi \in H^1_{\#}(D)
\label{eq-det-p-14}
\end{align}

Moreover, from \eqref{eq-det-u-5}--\eqref{eq-det-u-6}  we deduce that 
\begin{align}
\int_0^1 u_\varphi^*\, d\xi_3 &= - \frac{h^2}{12\mu R} \frac{\partial p^*}{\partial \xi_1} \label{eq-det-p-11} \\
\int_0^1 u_\theta^*\, d\xi_3 &= - \frac{h^2}{12\mu R \sin \xi_1} \frac{\partial p^*}{\partial \xi_2} 
+ \frac{1}{2} R\omega \sin \xi_1 \label{eq-det-p-12}
\end{align}
thus, \eqref{eq-det-p-14} yields 
\begin{align}
&\int_{D} 
\left [ \frac{h^3 \sin \xi_1}{12\mu} \frac{\partial p^*}{\partial \xi_1}  \frac{\partial \psi}{\partial \xi_1} + 
\left ( \frac{h^3}{12\mu \sin \xi_1} \frac{\partial p^*}{\partial \xi_2}  - 
\frac{R^2 h \omega \sin \xi_1}{2} \right ) \frac{\partial \psi}{\partial \xi_2} 
 \right ]  d\xi_1 d\xi_2 \nonumber \\
&{} + \int_0^{2\pi} R \left [ 
h(\varphi_1,\xi_2) \left ( \int_0^1 k_1^0(\xi_2, \xi_3)\, d\xi_3 \right ) \psi(\varphi_1, \xi_2) \sin \varphi_1 \right . 
\nonumber \\
&\quad {} - \left . h(\varphi_0,\xi_2) 
\left ( \int_0^1 k_0^0(\xi_2, \xi_3)\, d\xi_3 \right ) \psi(\varphi_0, \xi_2) \sin \varphi_0
\right ] \, d\xi_2  = 0 \nonumber \\ 
&\forall \psi \in H^1_{\#}(D)
\label{eq-det-p-15}
\end{align}

Finally, problem \eqref{eq-det-p-15} can be rewritten as \eqref{eq-det-p-16}, using 
Remark \ref{remark-bound-sin} to ensure the coercivity of the bilinear form in \eqref{eq-det-p-16}, and the integral condition \eqref{eq-conv-int-p} to guarantee the uniqueness of the solution.

\end{proof}

\begin{remark}
It is straightforward to verify that the strong formulation of problem \eqref{eq-det-p-16} is 
\begin{align}
&p^* \in H^1_{\#}(D), \quad \int_D p^*\, h\, \sin \xi_1\, d\xi_1d\xi_2 = 0, \label{eq-det-p-17-a} \\
&\frac{h^2}{12 \mu} \frac{\partial p^*}{\partial \xi_1}  = 
-R\left ( \int_0^1 k_0^0(\xi_2, \xi_3)\, d\xi_3 \right ) 
\textrm{ on } \xi_1 = \varphi_0, 
\label{eq-det-p-17-b} \\
&\frac{h^2}{12 \mu} \frac{\partial p^*}{\partial \xi_1}  = 
-R\left ( \int_0^1 k_1^0(\xi_2, \xi_3)\, d\xi_3 \right ) 
\textrm{ on } \xi_1 = \varphi_1, 
\label{eq-det-p-17-c} \\
&\frac{\partial}{\partial \xi_1} \left ( \frac{h^3 \sin \xi_1}{12\mu} \frac{\partial p^*}{\partial \xi_1} \right )   + 
\frac{\partial}{\partial \xi_2} \left ( \frac{h^3}{12\mu \sin \xi_1} \frac{\partial p^*}{\partial \xi_2} \right ) = 
\frac{R^2 \omega \sin \xi_1}{2} \frac{\partial h}{\partial \xi_2}
\label{eq-det-p-18}
\end{align}
\end{remark}

\begin{remark}
From the uniqueness of the limits $u^*_\varphi$, $u^*_\theta$, $u^*_\rho$ and $p^*$, 
we deduce that 
the convergences in \eqref{eq-th-conv-1}--\eqref{eq-th-conv-4} and \eqref{eq-conv-p-1}--\eqref{eq-conv-p-4} hold for the whole sequence, and not only for a subsequence.
\end{remark}

\begin{remark}
In problem \eqref{eq-det-p-17-a}--\eqref{eq-det-p-18}, it is possible to impose Dirichlet boundary conditions on the pressure instead of \eqref{eq-det-p-17-b}--\eqref{eq-det-p-17-c}, although their justification requires a different approach than the one used here (see, for example, \cite{Amedodjietal2002} or \cite{BayadaChambat1989}).
\end{remark}

\section*{Acknowledgements}
This work has been partially supported by Project PID2024-158035NB-I00, funded by 
MICIU/AEI/10.13039/501100011033 and FEDER, EU.

\end{document}